\def\baro{\vskip  .2truecm\hfill \hrule height.5pt \vskip  .2truecm}
\def\barba{\vskip -.1truecm\hfill \hrule height.5pt \vskip .4truecm}
\newcommand{\pcite}[1]{\citeauthor{#1}'s \citeyearpar{#1}}
\newtheorem{theorem}{Theorem}
\newtheorem{corollary}[theorem]{Corollary}
\newtheorem{proposition}[theorem]{Proposition}
\author{Qian Qin, James P. Hobert and Kshitij Khare \\ Department of Statistics \\ University of
	Florida}
\date{April 2019}
\begin{document}

\title{Estimating the spectral gap of a trace-class Markov operator}
		
		\keywords{Data augmentation algorithm, Eigenvalues,
                  Hilbert-Schmidt operator, Markov chain, Monte Carlo}
	\maketitle
	
	\begin{abstract}
          The utility of a Markov chain Monte Carlo algorithm is, in
          large part, determined by the size of the spectral gap of
          the corresponding Markov operator.  However, calculating
          (and even approximating) the spectral gaps of practical
          Monte Carlo Markov chains in statistics has proven to be an
          extremely difficult and often insurmountable task,
          especially when these chains move on continuous state
          spaces.  In this paper, a method for accurate estimation of
          the spectral gap is developed for general state space Markov
          chains whose operators are non-negative and trace-class.
          The method is based on the fact that the second largest
          eigenvalue (and hence the spectral gap) of such operators
          can be bounded above and below by simple functions of the
          power sums of the eigenvalues.  These power sums often have
          nice integral representations.  A classical Monte Carlo
          method is proposed to estimate these integrals, and a simple
          sufficient condition for finite variance is provided.  This
          leads to asymptotically valid confidence intervals for the
          second largest eigenvalue (and the spectral gap) of the
          Markov operator.  
          In contrast with previously existing techniques, our method is not
          based on a near-stationary version of the Markov chain, which,
          paradoxically, cannot be obtained in a principled manner without bounds on the spectral gap.
          On the other hand, it can be quite expensive from a computational
          standpoint.  The efficiency of the method is studied both theoretically
          and empirically.
	\end{abstract}
	
	\section{Introduction}
	
	Markov chain Monte Carlo (MCMC) is widely used to estimate
        intractable integrals that represent expectations with respect
        to complicated probability distributions.  Let $\pi: S \to
        [0,\infty)$ be a probability density function (pdf) with
        respect to a $\sigma$-finite measure $\mu$,
        where~$(S,\mathcal{U},\mu)$ is some measure space.  Suppose we
        want to approximate the integral
	\[
	J := \int_S f(u) \pi(u) \mu(du)
	\]
	for some function $f: S \to \mathbb{R}$.  Then~$J$ can be
        estimated by $\hat{J}_m := m^{-1} \sum_{k=0}^{m-1} f(\Phi_k),$
        where $\{\Phi_k\}_{k=0}^{m-1}$ are the first~$m$ elements of a
        well-behaved Markov chain with stationary density $\pi(\cdot)$.
        Unlike classical Monte Carlo estimators, $\hat{J}_m$ is not
        based on iid random elements.  Indeed, the elements of the
        chain are typically neither identically distributed nor
        independent.  Given $\mathrm{var}_{\pi}f,$ the variance of
        $f(\cdot)$ under the stationary distribution, the accuracy of
        $\hat{J}_m$ is primarily determined by two factors: (i) the
        convergence rate of the Markov chain, and (ii) the correlation
        between the $f(\Phi_k)$s when the chain is stationary. These
        two factors are related, and can be analyzed jointly under an
        operator theoretic framework.
	
        The starting point of the operator theoretic approach is the
        Hilbert space of functions that are square integrable with
        respect to the target pdf, $\pi(\cdot)$.  The Markov
        transition function that gives rise to $\Phi =
        \{\Phi_k\}_{k=0}^{\infty}$ defines a linear (Markov) operator
        on this Hilbert space.  (Formal definitions are given in
        Section~\ref{MarkovOperators}.)  If $\Phi$ is reversible, then
        it is geometrically ergodic if and only if the corresponding
        Markov operator admits a positive \textit{spectral gap}
        \citep{roberts1997geometric,kontoyiannis2012geometric}.  The
        gap, which is a real number in $(0,1]$, plays a fundamental role
        in determining the mixing properties of the Markov chain, with
        larger values corresponding to better performance.  For
        instance, suppose~$\Phi_0$ has pdf $\pi_0(\cdot)$ such that
        $d\pi_0/d\pi$ is in the Hilbert space, and let~$d(\Phi_k;
        \pi)$ denote the total variation distance between the
        distribution of $\Phi_k$ and the chain's stationary
        distribution.  Then, if~$\delta$ denotes the spectral gap, we
        have
	\[
	d(\Phi_k;\pi) \leq C(1-\delta)^k
	\]
	for all positive integers~$k,$ where~$C$ depends on $\pi_0$
        but not on~$k$ \citep{roberts1997geometric}.  Furthermore,
        $(1-\delta)^k$ gives the maximal absolute correlation between
        $\Phi_j$ and $\Phi_{j+k}$ as $j \to \infty$.  It follows
        \citep[see e.g.][]{mira1999ordering} that the asymptotic
        variance of $\sqrt{m}(\hat{J}_m - J)$ as $m \to \infty$ is
        bounded above by
	\[
	\frac{2-\delta}{\delta} \mathrm{var}_{\pi}f \,.
	\]
	Unfortunately, it is impossible to calculate the spectral gaps
        of the Markov operators associated with practically relevant
        MCMC algorithms, and even accurately approximating these
        quantities has proven extremely difficult.  In this paper, we
        develop a method of estimating the spectral gaps of Markov
        operators corresponding to a certain class of data
        augmentation (DA) algorithms \citep{tanner1987calculation},
        and then show that the method can be extended to handle a much
        larger class of reversible MCMC algorithms.

        DA Markov operators are necessarily non-negative.  Moreover,
        any non-negative Markov operator that is compact has a pure
        eigenvalue spectrum that is contained in the set $[0,1]$, and
        $1-\delta$ is precisely the second largest eigenvalue.  We
        propose a classical Monte Carlo estimator of $1-\delta$ for DA
        Markov operators that are \textit{trace-class}, i.e. compact
        with summable eigenvalues.  While compact operators were once
        thought to be rare in MCMC problems with uncountable state
        spaces \citep{chan1994discussion}, a string of recent results
        suggests that trace-class DA Markov operators are not at all
        rare \citep[see
        e.g.][]{qin2018trace,chakraborty2016convergence,choi2017anova,pal2017trace}.
        Furthermore, by exploiting a simple trick, we are able to
        broaden the applicability of our method well beyond DA
        algorithms.  Indeed, if a reversible Monte Carlo Markov chain
        has a Markov transition density (Mtd), and the corresponding
        Markov operator is Hilbert-Schmidt, then our method can be
        utilized to estimate its spectral gap.  This is because the
        square of such a Markov operator can be represented as a
        trace-class DA Markov operator.  A detailed explanation is
        provided in Section~\ref{daint}.

	Of course, there is a large literature devoted to developing
        theoretical bounds on the second largest eigenvalue of a
        Markov operator \citep[see e.g.][]{lawler1988bounds,
          sinclair1989approximate, diaconis1991geometric}. However,
        these results are typically not useful in situations where the
        state space,~$S$, is uncountable or multi-dimensional, which
        is our main focus. 
        There also exist a number of computational methods for
        approximating the eigenvalues of a Hilbert-Schmidt operator
        \citep[see e.g.][]{garren2000estimating,koltchinskii2000random,ahues2001spectral,chakraborty2017consistent}.
        Some such methods require sampling directly from $\pi(\cdot)$, which is
        impossible in an MCMC context.  
        The others require the user to simulate
        the Markov chain of interest until it is nearly stationary.
        Unfortunately, we cannot know if a chain has converged unless we have
        some information on its convergence rate, which is essentially what
        these methods are trying to acquire in the first place.  
        The classical
        Monte Carlo estimator that we introduce is calculated by simulating many
        copies of the Markov chain, each of a short length. These short chains need not be close to stationarity in order for the
        estimator to be valid.  
        Although powerful, this method is quite expensive from a
        computational standpoint. 
        Indeed, it works well only when the
        underlying dataset of the Bayesian model is small.  
        On the other hand,
        it is important as a ``proof of concept" that it is actually
        possible to get a handle on the spectral gaps of Markov operators
        corresponding to MCMC algorithms on continuous state spaces, which,
        until now, have proven to be extremely elusive quantities.

	The rest of the paper is organized as follows.  The notion of
        Markov operator is formalized in
        Section~\ref{MarkovOperators}.  In Section~\ref{powersum}, it
        is shown that the second largest eigenvalue of a non-negative
        trace-class operator can be bounded above and below by
        functions of the power sums of the operator's eigenvalues.  In
        Section~\ref{daint}, DA Markov operators are formally defined,
        and the sum of the $k$th ($k\in \mathbb{N}$) power of the
        eigenvalues of a trace-class DA Markov operator is related to
        a functional of its Mtd.  This functional is usually a
        multi-dimensional integral, and a classical Monte Carlo
        estimator of it is developed in Section~\ref{MC}. The efficiency of the Monte Carlo estimator is studied in Section~\ref{efficiency}.  Finally, in
        Section~\ref{illustration} we apply our method to a few
        well-known MCMC algorithms.  Our examples include
        \pcite{albert1993bayesian} DA algorithm for Bayesian probit
        regression, and a DA algorithm for Bayesian linear regression
        with non-Gaussian errors \citep{liu1996bayesian}.
        Further application of the method can be found in \cite{zhang2019trace}.

	\section{Markov operators} \label{MarkovOperators}
	
        Assume that the Markov chain $\Phi$ has a Markov transition
        density, $p(u, \cdot),\,u \in S$, such that, for any
        measurable $A \subset S$ and $u \in S$,
	\[
	\mathbb{P}(\Phi_k \in A|\Phi_0 = u) = \int_A p^{(k)}(u,u') \,
        \mu(du') \,,
	\]
	where
	\[
	p^{(k)}(u,\cdot) := \left\{
	\begin{array}{@{}ll@{}}
	p(u,\cdot)  & k=1 \\
	\int_S p^{(k-1)}(u,u') p(u',\cdot) \, \mu(du') & k > 1 \,
	\end{array} \right. 
	\]
	is the $k$-step Mtd corresponding to $p(u,\cdot)$.  We will
        assume throughout that~$\Phi$ is Harris ergodic,
        i.e. irreducible, aperiodic and Harris recurrent. Define a
        Hilbert space consisting of complex valued functions on~$S$
        that are square integrable with respect to $\pi(\cdot),$
        namely
	\[
	L^2(\pi) := \Big\{ f:S \to \mathbb{C} \; \Big \arrowvert \,
        \int_{S} |f(u)|^2 \pi(u) \, \mu(du) < \infty \Big\} \,.
	\]
	For $f, g \in L^2(\pi),$ their inner product is given by
	\[
	\langle f, g \rangle_{\pi} = \int_{S} f(u) \overline{g(u)}
        \pi(u) \, \mu(du) \,.
	\]
	We assume that $\mathcal{U}$ is countably generated, which
        implies that $L^2(\pi)$ is separable and admits a countable
        orthonormal basis \citep[see
        e.g.][Theorem~19.2]{billingsley2008probability}. The
        transition density $p(u,\cdot), \, u\in S$ defines the
        following linear operator~$P.$ For any $f \in L^2(\pi),$
	\[
	Pf(u) = \int_{S} p(u,u') f(u') \, \mu(du') \,.
	\]
	The spectrum of a linear operator~$L$ is defined to be
	\[
	\sigma(L) = \big \{ \lambda \in \mathbb{C} \, \big | \,
        (L-\lambda I)^{-1} \text{ doesn't exist or is unbounded} \big
        \} \,,
	\]
	where~$I$ is the identity operator. It is well-known that
        $\sigma(P)$ is a closed subset of the unit disk
        in~$\mathbb{C}.$ Let $f_0 \in L^2(\pi)$ be the normalized
        constant function, i.e. $f_0(u) \equiv 1$, then $Pf_0 = f_0$.
        (This is just a fancy way of saying that 1 is an eigenvalue
        with constant eigenfunction, which is true of \textit{all}
        Markov operators defined by ergodic chains.)  Denote by $P_0$ the operator such that
        $P_0 f = Pf - \langle f, f_0 \rangle_{\pi} f_0$ for all $f \in
        L^2(\pi).$ Then the spectral gap of~$P$ is defined as
	\[
	\delta = 1 - \sup \Big\{ |\lambda| \, \Big| \, \lambda \in
        \sigma(P_0) \Big\} \,.
	\]

	For the remainder of this section, we assume that~$P$ is
        non-negative (and thus self-adjoint) and compact.  This
        implies that $\sigma(P) \subset [0,1]$, and that any
        non-vanishing element of $\sigma(P)$ is necessarily an
        eigenvalue of~$P$.  Furthermore, there are at most countably
        many eigenvalues, and they can accumulate only at the
        origin. Let $\lambda_0, \lambda_1, \dots, \lambda_{\kappa}$ be
        the decreasingly ordered strictly positive eigenvalues of~$P$
        taking into account multiplicity, where $0 \leq \kappa \leq
        \infty$. Then $\lambda_0=1$ and~$\lambda_1$ is what we
        previously referred to as the ``second largest eigenvalue'' of
        the Markov operator.  If $\kappa = 0$, we set $\lambda_1 = 0$
        (which corresponds to the trivial case where
        $\{\Phi_k\}_{k=0}^{\infty}$ are iid). Since~$\Phi$ is Harris
        ergodic, $\lambda_1$ must be strictly less than~$1$.  Also,
        the compactness of $P$ implies that of $P_0$, and it's easy to
        show that $\sigma(P_0) = \sigma(P) \backslash \{1\}$.  Hence,
        $\Phi$ is geometrically ergodic and the spectral gap is
	\[
	\delta = 1-\lambda_1 > 0 \,.
	\]
	For further background on the spectrum of a linear operator,
        see e.g. \cite{helmberg2014introduction} or
        \cite{ahues2001spectral}.

	\section{Power sums of eigenvalues} \label{powersum}
	
	We now develop some results relating $\lambda_1$ to the power
        sum of $P$'s eigenvalues.  We assume throughout this section
        that~$P$ is non-negative and trace-class (compact with
        summable eigenvalues).  For any positive integer $k$, let
	\[
	s_k = \sum_{i=0}^{\kappa} \lambda_i^k \,,
	\]
	and define $s_0$ to be infinity. The first power sum, $s_1$,
        is the trace norm of $P$ \citep[see e.g.][]{conway1990course,
          conway2000course}, while $\sqrt{s_2}$ is the Hilbert-Schmidt
        norm of~$P.$ That~$P$ is trace-class implies $s_1 < \infty,$
        and it's clear that $s_k$ is decreasing in~$k.$ 
        
     The magnitude of $s_k$ is directly related to the convergence behavior of the chain. For instance, suppose that the chain starts at a point mass $\Phi_0 = u$, then the chi-square distance between the distribution of $\Phi_k$ and the stationary distribution is given by \citep[see e.g.][]{diaconis2008gibbs}
     \[
     \chi_k^2(u) := \int_{S_U} \frac{\left(p^{(k)}(u,u') - \pi(u') \right)^2}{\pi(u')} \, \mu(du') =  \sum_{i=1}^{\kappa} \lambda_i^{2k} |f_i(u)|^2,
     \]
     where $f_i: S_U \to \mathbb{C}$ is the normalized eigenfunction corresponding to $\lambda_i$. It follows that
     \[
     s_{2k} = \sum_{i=1}^{\kappa} \lambda_i^{2k} = \int_{S_U} \chi_k^2(u) \pi(u) \, \mu(du),
     \]
     which is the average of $\chi_k^2(u)$ under~$\pi.$ More importantly, one can use functions of $s_k$ to bound $\lambda_1,$ and thus the spectral gap.
	
	Observe that,
	\begin{equation*} \lambda_1 \leq u_k := (s_k-1)^{1/k},\quad
          \forall \, k \in \mathbb{N} \,.
	\end{equation*}
	Moreover, if $\kappa \geq 1,$ then it's easy to show that
	\begin{equation*} \lambda_1 \geq l_k :=
          \frac{s_k-1}{s_{k-1}-1}, \quad \forall \, k \in \mathbb{N}
          \,.
	\end{equation*}
	We now show that, in fact, these bounds are monotone in $k$
        and converge to $\lambda_1$.
	
	\begin{proposition} \label{eigenlim}
		As $k \to \infty$,
		\begin{equation} \label{uklim}
		u_k \downarrow \lambda_1 \,,
		\end{equation}
		and if furthermore $\kappa \geq 1,$
		\begin{equation} \label{lklim}
		l_k \uparrow \lambda_1 \,.
		\end{equation}
	\end{proposition}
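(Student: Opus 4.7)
The plan is to view the strictly positive eigenvalues $(\lambda_1, \lambda_2, \ldots)$ as an element of $\ell^1$ (guaranteed by the trace-class hypothesis) and reduce both statements to elementary facts about its $\ell^p$ norms. Throughout, $s_k - 1 = \sum_{i \geq 1} \lambda_i^k$, and the eigenvalues are decreasingly ordered with $\lambda_1 \geq \lambda_2 \geq \cdots$.

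For \eqref{uklim}, I would note that $u_k$ is precisely the $\ell^k$-norm $\|\lambda\|_k$. Monotonicity is the standard $\ell^p$-inclusion inequality: by homogeneity it suffices to verify that $\|\lambda\|_k = 1$ forces $\|\lambda\|_{k+1} \leq 1$, and this follows because $\lambda_i \leq \|\lambda\|_k = 1$ for each $i$ yields $\lambda_i^{k+1} \leq \lambda_i^k$, whence $\|\lambda\|_{k+1}^{k+1} \leq \|\lambda\|_k^k = 1$. For the limit, I would invoke the familiar fact that $\|x\|_p \to \|x\|_\infty$ as $p \to \infty$ whenever $x \in \ell^r$ for some $r < \infty$; in the present case $\|\lambda\|_\infty = \lambda_1$ since the eigenvalues are listed in decreasing order.

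For \eqref{lklim}, monotonicity will follow from the log-convexity of $k \mapsto s_k - 1$. Writing $\lambda_i^k = \lambda_i^{(k-1)/2}\lambda_i^{(k+1)/2}$ and applying Cauchy--Schwarz to the sum over $i$ gives $(s_k - 1)^2 \leq (s_{k-1} - 1)(s_{k+1} - 1)$, which rearranges to $l_k \leq l_{k+1}$. The bound $l_k \leq \lambda_1$ is the observation that $l_k = \sum_{i \geq 1} p_i \lambda_i$ is a convex combination of the eigenvalues with weights $p_i = \lambda_i^{k-1}/(s_{k-1}-1)$. To identify the limit, I would factor out $\lambda_1$: with $\mu_i := \lambda_i/\lambda_1 \in [0,1]$, one has $l_k = \lambda_1 \cdot (\sum_i \mu_i^k)/(\sum_i \mu_i^{k-1})$; since $\mu_i^k \leq \mu_i$ for $k \geq 1$ and $\sum_i \mu_i \leq s_1/\lambda_1 < \infty$, dominated convergence yields $\sum_i \mu_i^k \to \#\{i \geq 1 : \lambda_i = \lambda_1\}$, a finite positive integer, with the same limit for the denominator. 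Hence the ratio tends to $1$ and $l_k \uparrow \lambda_1$.

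The bulk of the argument reduces to standard $\ell^p$-inequalities and is essentially routine. The one mildly delicate step is the identification of the limit in \eqref{lklim} when $\kappa = \infty$, which needs the dominated convergence argument just described; beyond that I do not anticipate any serious obstacles.
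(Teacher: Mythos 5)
Your argument is correct, and it shares the paper's skeleton (monotonicity plus identification of the limit), but the tools differ in places worth noting. Your two monotonicity proofs are essentially the paper's in different clothing: the normalization/$\ell^p$-inclusion argument for $u_{k+1}\le u_k$ replaces the paper's interpolation step $u_{k+1}\le \lambda_1^{1/(k+1)}(s_k-1)^{1/(k+1)}\le u_k$, and your Cauchy--Schwarz proof of the log-convexity inequality $(s_k-1)^2\le (s_{k-1}-1)(s_{k+1}-1)$ is interchangeable with the paper's symmetrization-plus-AM--GM proof of the same inequality. Where you genuinely diverge is in identifying the limits: the paper isolates the multiplicity $m$ of $\lambda_1$, compares the tail $r_k=\sum_{i>m}\lambda_i^k$ with $\lambda_{m+1}^{k}$ to get the expansion $s_k-1=m\lambda_1^k(1+o(1))$, and reads off both \eqref{uklim} and \eqref{lklim} from it; you instead invoke the standard fact $\|x\|_p\to\|x\|_\infty$ for $u_k$ and a dominated-convergence argument (with summable dominating sequence $\mu_i=\lambda_i/\lambda_1$) for $l_k$. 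Both routes are valid; yours is shorter and rests on off-the-shelf facts, while the paper's explicit expansion has the side benefit of giving the rate at which $u_k$ and $l_k$ approach $\lambda_1$, which is exactly what gets reused in Section~\ref{efficiency} to quantify how fast $u_k-l_k$ shrinks --- information your limit arguments do not produce. In a final write-up you should add one sentence each for the degenerate cases: $\kappa=0$ (empty positive spectrum below $\lambda_0$, so $u_k\equiv 0=\lambda_1$), and $k=1$ in \eqref{lklim}, where the convention $s_0=\infty$ makes $l_1=0$, so the convex-combination and Cauchy--Schwarz statements are degenerate there but harmless.
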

	\begin{proof}
          We begin with \eqref{uklim}.  When $\kappa = 0,$ $s_k \equiv
          1$ and the conclusion follows. Suppose $\kappa \geq 1,$ and
          that the second largest eigenvalue is of multiplicity $m,$
          i.e.
		\[
		1 = \lambda_0 > \lambda_1 = \lambda_2 = \dots =
                \lambda_m > \lambda_{m+1} \geq \dots \geq
                \lambda_{\kappa} > 0.
		\]
		If $\kappa = m$, then $s_k - 1 = m\lambda_1^k$ for all
                $k\geq 1$ and the proof is trivial. Suppose for the
                rest of the proof that $\kappa \geq m+1.$ For positive
                integer $k,$ let $r_k = \sum_{i=m+1}^{\kappa}
                \lambda_i^k < \infty.$ Then $r_k > 0,$ and
		\[
		\frac{r_{k+1}}{r_k} \leq \lambda_{m+1} < \lambda_1 \,.
		\]
		Hence,
		\[
		\lim\limits_{k \to \infty} \frac{r_k}{s_k-1-r_k} =
                \lim\limits_{k \to \infty} \frac{r_k}{m\lambda_1^k}
                \leq \lim\limits_{k \to \infty}
                \frac{r_1\lambda_{m+1}^{k-1}}{m\lambda_1^k} = 0 \,.
		\]
		It follows that
		\[
		\log u_k = \log \lambda_1 + \frac{1}{k}\log m +
                \frac{1}{k} \log (1+o(1)) \to \log \lambda_1 \,.
		\]
		Finally,
		\[
		u_{k+1} < \lambda_1^{1/(k+1)} \Big(
                \sum_{i=1}^{\kappa} \lambda_i^k \Big)^{1/(k+1)} \leq
                \Big( \sum_{i=1}^{\kappa} \lambda_i^k
                \Big)^{1/[k(k+1)]} \Big( \sum_{i=1}^{\kappa}
                \lambda_i^k \Big)^{1/(k+1)} = u_k \,,
		\]
		and~\eqref{uklim} follows.
		
                Now onto \eqref{lklim}.  We have already shown that
		\[
		s_k - 1 = m\lambda_1^k(1+o(1)) \,.
		\]
		Thus,
		\[
		l_k =
                \frac{m\lambda_1^k(1+o(1))}{m\lambda_1^{k-1}(1+o(1))}
                \to \lambda_1 \,.
		\]
		To show that $l_k$ is increasing in $k$, which would
                complete the proof, we only need note that
		\[
		\begin{aligned}
                  (s_{k+1}-1)(s_{k-1}-1) &= \sum_{i=1}^{\kappa} \lambda_i^{k+1} \sum_{j=1}^{\kappa} \lambda_j^{k-1} \\
                  &= \frac{1}{2}\sum_{i=1}^{\kappa} \sum_{j=1}^{\kappa} \lambda_i^{k-1}\lambda_j^{k-1} (\lambda_i^2 + \lambda_j^2) \\
                  & \geq \sum_{i=1}^{\kappa} \sum_{j=1}^{\kappa} \lambda_i^{k}\lambda_j^{k} \\
                  &= (s_k-1)^2 \,.
		\end{aligned}
		\]
	\end{proof}

	
	Suppose now that we are interested in the convergence behavior
        of a particular Markov operator that is known to be
        non-negative and trace-class.  If it is possible to estimate
        $s_k$, then Proposition~\ref{eigenlim} provides a method of
        getting approximate bounds on $\lambda_1$.  When a DA Markov
        operator is trace-class, there is a nice integral
        representation of $s_k$ that leads to a simple, classical
        Monte Carlo estimator of $s_k$.  In the following section, we
        describe some theory for DA Markov operators, and in
        Section~\ref{MC}, we develop a classical Monte Carlo estimator
        of $s_k$.

	\section{Data augmentation operators and an integral representation of $s_k$} \label{daint}
	
        In order to formally define DA, we require a second measure
        space.  Let $(S_V, \mathcal{V}, \nu)$ be a $\sigma$-finite
        measure space such that $\mathcal{V}$ is countably generated.
        Also, rename $S$ and $\pi$, $S_U$ and $\pi_U$, respectively.
        Consider the random element $(U,V)$ taking values in $S_U
        \times S_V$ with joint pdf $\pi_{U,V}(\cdot,\cdot).$ Suppose
        the marginal pdf of $U$ is the target, $\pi_U(\cdot)$, and
        denote the marginal pdf of $V$ by $\pi_V(\cdot).$ We further
        assume that the conditional densities $\pi_{U|V}(u|v) :=
        \pi_{U,V}(u,v)/\pi_V(v)$ and $\pi_{V|U}(v|u) :=
        \pi_{U,V}(u,v)/\pi_U(u)$ are well defined almost everywhere in
        $S_U \times S_V.$ Recall that~$\Phi$ is a Markov chain on the
        state space $S_U$ with Mtd $p(u,\cdot),\, u \in S_U.$ We call
        $\Phi$ a DA chain, and accordingly,~$P$ a DA operator, if
        $p(u,\cdot)$ can be expressed as
	\begin{equation} \label{DAmtd} p(u,\cdot) = \int_{S_V}
          \pi_{U|V}(\cdot|v) \pi_{V|U}(v|u) \, \nu(dv) \,.
	\end{equation}
	Such a chain is necessarily reversible with respect to $\pi_U(\cdot)$. 
	To simulate it, in each iteration, one first
        draws the latent element~$V$ using $\pi_{V|U}(\cdot|u)$, where
        $u \in S_U$ is the current state, and then given $V=v$, one
        updates the current state according to $\pi_{U|V}(\cdot|v)$.
        A DA operator is non-negative, and thus
        possesses a positive spectrum \citep{liu:wong:kong:1994}.
	
	Assume that~\eqref{DAmtd} holds. Given $k \in \mathbb{N}$, the power sum of~$P$'s eigenvalues,
        $s_k,$ if well defined, is closely related to the diagonal
        components of $p^{(k)}(\cdot,\cdot).$ Just as we can calculate
        the sum of the eigenvalues of a matrix by summing its
        diagonals, we can obtain $s_k$ by evaluating $\int_{S_U}
        p^{(k)}(u,u) \, \mu(du)$.  Here is a formal statement.
	
	\begin{theorem} \label{basic}
		The DA operator~$P$ is trace-class if and only if
		\begin{equation} \label{traceclass}
		\int_{S_U} p(u,u) \, \mu(du) < \infty \,.
		\end{equation}
		If~\eqref{traceclass} holds, then for any positive
                integer $k,$
		\begin{equation} \label{sum=int} s_k :=
                  \sum_{i=0}^{\kappa} \lambda_i^k = \int_{S_U}
                  p^{(k)}(u,u) \, \mu(du) \,.
		\end{equation}
	\end{theorem}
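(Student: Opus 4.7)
The plan is to factor $P$ as $AA^*$ for a conditional expectation operator $A$, reducing the trace-class criterion for $P$ to a Hilbert--Schmidt norm computation. Define $A: L^2(\pi_V) \to L^2(\pi_U)$ by $(Af)(u) = \int_{S_V} f(v)\pi_{V|U}(v|u)\,\nu(dv) = \mathbb{E}[f(V)|U=u]$, and check (via Fubini) that its adjoint is $(A^*g)(v) = \int g(u)\pi_{U|V}(u|v)\,\mu(du)$ and that $AA^* = P$, using the DA structure~\eqref{DAmtd}. A standard result says that $P = AA^*$ is trace-class if and only if $A$ is Hilbert--Schmidt, in which case $\mathrm{tr}(P) = \|A\|_{HS}^2$. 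Writing $A$ as an integral operator with kernel $\kappa(u,v) = \pi_{U,V}(u,v)/[\pi_U(u)\pi_V(v)]$ with respect to $\pi_V\,\nu$, the standard HS-norm formula yields
\[
\|A\|_{HS}^2 = \int\!\!\int \frac{\pi_{U,V}(u,v)^2}{\pi_U(u)\pi_V(v)}\,\mu(du)\,\nu(dv) = \int\!\!\int \pi_{U|V}(u|v)\pi_{V|U}(v|u)\,\mu(du)\,\nu(dv),
\]
which by Tonelli equals $\int p(u,u)\,\mu(du)$. This establishes the trace-class criterion~\eqref{traceclass} and the $k=1$ case of~\eqref{sum=int}.

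For $k \geq 2$, assume~\eqref{traceclass} holds, so that $P$---and hence every $P^j$---is trace-class and Hilbert--Schmidt. Split $k = a + b$ with integers $a, b \geq 1$. By the standard trace identity for products of Hilbert--Schmidt operators, $\mathrm{tr}(P^k) = \mathrm{tr}(P^a P^b) = \langle P^a, (P^b)^*\rangle_{HS} = \langle P^a, P^b\rangle_{HS}$, where the last step uses self-adjointness of $P$. The key observation is that $P^j$, viewed as an integral operator on $L^2(\pi_U)$, has integral kernel $p^{(j)}(u,u')/\pi_U(u')$ with respect to the measure $\pi_U(u')\,\mu(du')$. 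Substituting these kernels into the HS inner product gives
\[
\langle P^a, P^b\rangle_{HS} = \int\!\!\int \frac{p^{(a)}(u,u')\,p^{(b)}(u,u')\,\pi_U(u)}{\pi_U(u')}\,\mu(du)\,\mu(du').
\]
Reversibility, $\pi_U(u)\,p^{(a)}(u,u') = \pi_U(u')\,p^{(a)}(u',u)$, cancels the density ratio, and Chapman--Kolmogorov, $\int p^{(a)}(u',u)\,p^{(b)}(u,u')\,\mu(du) = p^{(k)}(u',u')$, collapses the double integral to $\int p^{(k)}(u,u)\,\mu(du)$. Combined with the spectral identity $\mathrm{tr}(P^k) = \sum_i \lambda_i^k = s_k$, which holds because $P^k$ is positive and trace-class, this proves~\eqref{sum=int} for all positive integers $k$.

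The main technical obstacle will be the careful measure-theoretic bookkeeping---in particular, confirming that the integral kernel of $P^j$ with respect to the weighted measure $\pi_U\,\mu$ really is $p^{(j)}(u,u')/\pi_U(u')$, and justifying the various applications of Tonelli. Because all densities involved are non-negative and $s_k$ is finite (it is decreasing in $k$, so $s_k \leq s_1 < \infty$ once $P$ is trace-class), Tonelli applies without additional hypotheses, and none of these steps should present serious difficulty once the factorization and kernel identifications are in place.
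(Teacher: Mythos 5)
Your proposal is correct in outline, but it takes a genuinely different route from the paper's appendix proof. The paper works bare-handed: it proves the $k=1$ identity by expanding $\varphi(u,v)=\pi_{U,V}(u,v)/[\pi_U(u)\pi_V(v)]$ in the tensor basis $\{g_ih_j\}$ of $L^2(\pi_U\times\pi_V)$ (two applications of Parseval), it proves the ``trace-class $\Rightarrow$ \eqref{traceclass}'' direction by an explicit essential-uniqueness argument (Dynkin's $\pi$--$\lambda$ theorem applied to $\varphi$ versus $\tilde{\varphi}$), and it handles $k\ge 2$ not by kernel manipulations but by showing that $P^k$ is itself a DA operator --- the latent element being the midpoint $U_{k/2}$ (or $V_{(k-1)/2}$) of the stationary two-component Gibbs chain --- so the $k=1$ identity can be applied afresh to $P^k$. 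You instead package the $k=1$ case as the factorization $P=AA^*$ together with $\mathrm{tr}(AA^*)=\|A\|_{HS}^2$ and the Hilbert--Schmidt kernel theorem, and you handle $k\ge 2$ by splitting $k=a+b$, writing $\mathrm{tr}(P^aP^b)=\langle P^a,P^b\rangle_{HS}$, and collapsing the kernel integral via reversibility and Chapman--Kolmogorov. Those computations are correct, and your route is shorter because it quotes operator-theoretic results that the paper deliberately reproves (the paper itself describes the theorem as a combination of standard facts, proved in the appendix for completeness).

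Two caveats about where the real work hides. First, in the ``trace-class $\Rightarrow$ \eqref{traceclass}'' direction the kernel $\varphi$ is not known a priori to be square-integrable, so ``the standard HS-norm formula'' must be invoked in the stronger form ``every HS operator has an essentially unique $L^2$ kernel, and a pointwise-defined kernel inducing the same operator agrees with it a.e.''; that a.e.\ identification is precisely the content of the paper's Dynkin $\pi$--$\lambda$ argument and is not absorbed by Tonelli. Second, your $k\ge 2$ step equates $\langle P^a,P^b\rangle_{HS}$ with the double integral of the kernels $p^{(j)}(u,u')/\pi_U(u')$, which presupposes that these are the canonical $L^2$ kernels of $P^a$ and $P^b$; but square-integrability of $p^{(a)}(u,u')/\pi_U(u')$ is, by reversibility and Chapman--Kolmogorov, equivalent to $\int_{S_U} p^{(2a)}(u,u)\,\mu(du)<\infty$, i.e.\ to an instance of \eqref{sum=int} itself, so simply asserting it risks circularity. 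It can be supplied non-circularly: the DA structure \eqref{DAmtd} and Cauchy--Schwarz give
\[
\int_{S_U}\int_{S_U}\frac{p(u,u')^2\,\pi_U(u)}{\pi_U(u')}\,\mu(du)\,\mu(du')\;\le\;\Big(\int_{S_U}p(u,u)\,\mu(du)\Big)^2<\infty,
\]
so under \eqref{traceclass} $P$ is Hilbert--Schmidt with canonical kernel $p(u,u')/\pi_U(u')$, and the kernels of the powers then follow by composing kernels (Tonelli). With these two points made explicit, your argument goes through.
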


	Theorem~\ref{basic} is a combination of a few standard results
        in classical functional analysis.  It is fairly well-known,
        but we were unable to find a complete proof in the literature.
        An elementary proof is given in the appendix for
        completeness. For a more modern version of the theorem, see
        \cite{brislawn1988kernels}.

        It is often possible to exploit Theorem~\ref{basic} even when
        $\Phi$ is \textit{not} a DA Markov chain.  Indeed, suppose
        that $\Phi$ is reversible, but is not a DA chain.  Then $P$ is
        not a DA operator, but $P^2$, in fact, is.
        (Just take $\pi_{U,V}(u,v) = \pi_U(u)p(u,v)$.)  If, in
        addition,~$P$ is Hilbert-Schmidt, which is equivalent to
	\[
	\int_{S_U} \int_{S_U} \frac{(p(u,u'))^2 \pi_U(u)}{\pi_U(u')}
        \, \mu(du) \, \mu(du') < \infty \,,
        \]
        then by a simple spectral decomposition \citep[see e.g.][ \S 28
        Corollary 2.1]{helmberg2014introduction} one can show that
        $P^2$ is trace-class, and its eigenvalues are precisely the
        squares of the eigenvalues of~$P$.  In this case, the spectral
        gap of~$P$ can be expressed as~$1$ minus the square root of
        $P^2$'s second largest eigenvalue.  Moreover, by
        Theorem~\ref{basic}, for $k \in \mathbb{N},$ the sum of the
        $k$th power of $P^2$'s eigenvalues is equal to $\int_{S_U}
        p^{(2k)}(u,u) \, \mu(du) < \infty$.

        We now briefly describe the so-called sandwich algorithm,
        which is a variant of DA that involves an extra step
        sandwiched between the two conditional draws of DA
        \citep{liu1999parameter,hobert2008theoretical}.  Let
        $s(v,\cdot), \, v \in S_V$ be a Markov transition function
        (Mtf) with invariant density $\pi_V(\cdot)$. Then
	\begin{equation} \label{sandwich} \tilde{p}(u,\cdot) =
          \int_{S_V}\int_{S_V} \pi_{U|V}(\cdot|v') s(v,dv')
          \pi_{V|U}(v|u) \nu(dv) \,, \; u \in S_U\,,
	\end{equation}
	is an Mtd with invariant density $\pi_U(\cdot)$.  This Mtd
        defines a new Markov chain, call it $\tilde{\Phi}$, which we
        refer to as a sandwich version of the original DA chain,
        $\Phi$.  To simulate~$\tilde{\Phi}$, in each iteration, the
        latent element is first drawn from $\pi_{V|U}(\cdot|u)$, and
        then updated using $s(v,\cdot)$ before the current state is
        updated according to $\pi_{U|V}(\cdot|v')$.  Sandwich chains
        often converge much faster than their parent DA chains
        \citep[see e.g.][]{khare2011spectral}.

        Of course, $\tilde{p}(u,\cdot)$ defines a Markov operator on
        $L^2(\pi_U)$, which we refer to as~$\tilde{P}$.  It is easy to
        see that, if the Markov chain corresponding to $s(v,\cdot)$ is
        reversible with respect to $\pi_V(\cdot)$, then
        $\tilde{p}(u,\cdot)$ is reversible with respect to
        $\pi_U(\cdot)$.  Thus, when $s(v,\cdot)$ is reversible,
        $\tilde{P}^2$ is a DA operator.  Interestingly, it turns out
        that $\tilde{p}(u,\cdot)$ can often be re-expressed as the Mtd
        of a DA chain, in which case $\tilde{P}$ itself is a DA
        operator.  Indeed, a sandwich Mtd $\tilde{p}(u,\cdot)$ is said
        to be ``representable'' if there exists a random
        element~$\tilde{V}$ in $S_V$ such that
	\begin{equation} \label{representable} \tilde{p}(u,u') =
          \int_{S_V} \pi_{U|\tilde{V}}(u'|v) \pi_{\tilde{V}|U}(v|u) \,
          \nu(dv) \,,
	\end{equation}
	where $\pi_{U|\tilde{V}}(u'|v)$ and $\pi_{\tilde{V}|U}(v|u)$
        have the apparent meanings \citep[see, e.g.][]{hobe:2011}.  It
        is shown in Proposition~\ref{traceclassA} in Section~\ref{MC}
        that when~$P$ is trace-class and $\tilde{p}(u,\cdot)$ is
        representable, $\tilde{P}$ is also trace-class.  In this case,
        let $\{\tilde{\lambda}_i\}_{i=0}^{\tilde{\kappa}}$ be the
        decreasingly ordered positive eigenvalues of~$\tilde{P}$ taking into
        account multiplicity, where $0 \leq \tilde{\kappa} \leq
        \infty$.  Then $\tilde{\lambda}_0 = 1$, and $\tilde{\lambda}_1
        \leq \lambda_1 < 1$ \citep{hobert2008theoretical}.  For a
        positive integer~$k,$ we will denote
        $\sum_{i=0}^{\tilde{\kappa}} \tilde{\lambda}_i^k$ by
        $\tilde{s}_k$.  Henceforth, we assume that $\tilde{p}(u,\cdot)$
        is representable and we treat~$\tilde{P}$ as a DA operator.
	
	It follows from Theorem~\ref{basic} that in order to find
        $s_k$ or $\tilde{s}_k$, all we need to do is evaluate
        $\int_{S_U} p^{(k)}(u,u) \,\mu(du)$ or $\int_{S_U}
        \tilde{p}^{(k)}(u,u) \,\mu(du),$ where
        $\tilde{p}^{(k)}(u,\cdot)$ is the $k$-step Mtd of the sandwich
        chain.  Of course, calculating these integrals (in non-toy
        problems) is nearly always impossible, even for $k=1$.  In the
        next section, we introduce a method of estimating these two
        integrals using classical Monte Carlo.
	
	Throughout the remainder of the paper, we assume that~$P$ is a
        DA operator with Mtd given by~\eqref{DAmtd}, and
        that~\eqref{traceclass} holds.

	\section{Classical Monte Carlo} \label{MC}
	Consider the Mtd given by
	\begin{equation} \label{a} a(u, \cdot) = \int_{S_V}\int_{S_V}
          \pi_{U|V}(\cdot|v') r(v,dv') \pi_{V|U}(v|u) \, \nu(dv) \,, \; u \in S_U \,,
	\end{equation}
	where $r(v,\cdot), \, v \in S_V$ is an Mtf on $S_V$ with
        invariant pdf $\pi_V(\cdot).$ We will show in this section
        that this form has utility beyond constructing sandwich
        algorithms.  Indeed, the $k$-step Mtd of a DA algorithm (or a
        sandwich algorithm) can be re-expressed in the
        form~\eqref{a}. This motivates the development of a general
        method for estimating the integral $\int_{S_U} a(u,u) \,
        \mu(du)$, which is the main topic of this section.
	
	We begin by showing how $p^{(k)}(u,\cdot), \, u \in S_U$ can
        be written in the form~\eqref{a}.  The case $k=1$ is trivial.
        Indeed, if $r(v,\cdot)$ is taken to be the kernel of the
        identity operator, then $a(u,\cdot) = p(u,\cdot)$.  Define an
        Mtd $q(v,\cdot), \, v\in S_V$ by
	\[
	q(v,\cdot) = \int_{S_U} \pi_{V|U}(\cdot|u) \pi_{U|V}(u|v) \, \mu(du),
	\]
	and let $q^{(k)}(v,\cdot), \, k \geq 1$ denote the
        corresponding $k$-step Mtd. If we let
	\[
	r(v,dv')= q^{(k-1)}(v,v') \, \nu(dv'), \, v \in S_V
	\]
	for $k \geq 2,$ then $a(u,\cdot) = p^{(k)}(u,\cdot)$. Next,
        consider the sandwich Mtd $\tilde{p}^{(k)}(u,\cdot), \, u \in
        S_U$.  Again, the $k=1$ case is easy.  Taking
	\[
	r(v,\cdot) = s(v,\cdot)
	\]
	yields $a(u,\cdot) = \tilde{p}(u,\cdot)$.  Now let
	\[
	\tilde{q}(v,\cdot) = \int_{S_U} \int_{S_V} s(v',\cdot)
        \pi_{V|U}(v'|u) \pi_{U|V}(u|v)\, \nu(dv')\, \mu(du) \,,
	\]
	and denote the corresponding $k$-step transition function by
        $\tilde{q}^{(k)}(v,\cdot)$.  Then taking
	\[
	r(v,\cdot) = \int_{S_V} \tilde{q}^{(k-1)}(v',\cdot) s(v,dv')
	\]
	when $k \geq 2$ yields $a(u,\cdot) = \tilde{p}^{(k)}(u,\cdot).$
	

        The following proposition shows that, when $P$ is trace-class,
        $\int_{S_U} a(u,u) \, \mu(du)$ is finite.
	\begin{proposition} \label{traceclassA}
		$\int_{S_U} a(u,u) \, \mu(du) < \infty.$
	\end{proposition}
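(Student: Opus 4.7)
My strategy is to write $\int_{S_U} a(u,u)\,\mu(du)$ as a repeated integral and then dominate it by the known finite quantity $\int_{S_U} p(u,u)\,\mu(du)$ using two applications of Cauchy--Schwarz, closing the argument with the invariance of $\pi_V$ under $r$.

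First, by Tonelli (all integrands are non-negative),
\[
\int_{S_U} a(u,u)\,\mu(du) = \int_{S_V}\!\int_{S_V} h(v,v')\,r(v,dv')\,\nu(dv),
\]
where $h(v,v') := \int_{S_U} \pi_{U|V}(u|v')\,\pi_{V|U}(v|u)\,\mu(du)$. Invoking the identity $\pi_{V|U}(v|u)\pi_U(u) = \pi_{U|V}(u|v)\pi_V(v)$ puts $h$ into the symmetric form
\[
h(v,v') = \pi_V(v)\int_{S_U} \frac{\pi_{U|V}(u|v)\,\pi_{U|V}(u|v')}{\pi_U(u)}\,\mu(du).
\]
A first Cauchy--Schwarz with respect to $\pi_U(u)^{-1}\mu(du)$ then yields $h(v,v') \le \pi_V(v)\sqrt{g(v)g(v')}$, where $g(v) := \int_{S_U} \pi_{U|V}(u|v)^2/\pi_U(u)\,\mu(du)$. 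Observe that $\pi_V(v)g(v) = h(v,v)$, so by Theorem~\ref{basic} at $k=1$,
\[
\int_{S_V} g(v)\,\pi_V(v)\,\nu(dv) = \int_{S_V} h(v,v)\,\nu(dv) = \int_{S_U} p(u,u)\,\mu(du) < \infty.
\]

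Combining the preceding displays,
\[
\int_{S_U} a(u,u)\,\mu(du) \le \int_{S_V}\!\int_{S_V} \sqrt{g(v)g(v')}\,\pi_V(v)\,r(v,dv')\,\nu(dv).
\]
Since $\pi_V(v)\,r(v,dv')\,\nu(dv)$ is a probability measure on $S_V\times S_V$, a second Cauchy--Schwarz bounds this by $\sqrt{A}\cdot\sqrt{B}$, where $A := \int g(v)\,\pi_V(v)\,\nu(dv)$ and $B := \int\!\!\int g(v')\,\pi_V(v)\,r(v,dv')\,\nu(dv)$. We have already shown $A = \int p(u,u)\,\mu(du)$; by the $\pi_V$-invariance of $r$, $\int\pi_V(v)\,r(v,dv')\,\nu(dv) = \pi_V(v')\,\nu(dv')$, so $B = \int g(v')\pi_V(v')\,\nu(dv') = \int p(u,u)\,\mu(du)$ as well. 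Thus $\int a(u,u)\,\mu(du) \le \int p(u,u)\,\mu(du)$, which is finite by the trace-class hypothesis~\eqref{traceclass}.

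The main obstacle is essentially bookkeeping: ensuring that each Cauchy--Schwarz is set up against a weight measure compatible with the $\pi_V$-invariance of $r$ so that the second factor $B$ collapses cleanly. Once detailed balance reveals the symmetric $(v,v')$-structure of $h$, both applications of Cauchy--Schwarz are natural and the argument essentially writes itself.
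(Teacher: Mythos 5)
Your argument is correct, and it delivers the same explicit bound $\int_{S_U} a(u,u)\,\mu(du)\le\int_{S_U}p(u,u)\,\mu(du)$ as the paper's proof, from the same three ingredients: Cauchy--Schwarz, a convexity step to absorb the kernel $r(v,dv')$, and the $\pi_V$-invariance of $r$. The bookkeeping, however, is organized dually. The paper stays on $S_U\times S_V$: it writes the integrand as the product of the $r$-smoothed density ratio $\int_{S_V}\frac{\pi_{U,V}(u,v')}{\pi_U(u)\pi_V(v')}\,r(v,dv')$ and the plain ratio $\frac{\pi_{U,V}(u,v)}{\pi_U(u)\pi_V(v)}$, applies Cauchy--Schwarz in $L^2(\pi_U\times\pi_V)$, and controls the squared norm of the smoothed factor by Jensen's inequality followed by invariance. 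You instead integrate out $u$ first: a pointwise Cauchy--Schwarz in $u$ gives $h(v,v')\le\pi_V(v)\sqrt{g(v)g(v')}$, and then a second Cauchy--Schwarz with respect to the probability measure $\pi_V(v)\,r(v,dv')\,\nu(dv)$ on $S_V\times S_V$, together with invariance, collapses both resulting factors to $\int_{S_U}p(u,u)\,\mu(du)$. Your route makes the symmetric $(v,v')$-structure explicit, at the cost of introducing $h$ and $g$; the paper's route trades your first Cauchy--Schwarz for Jensen and never leaves the $(u,v)$ space. One small correction: the identity $\int_{S_V}g(v)\pi_V(v)\,\nu(dv)=\int_{S_U}p(u,u)\,\mu(du)$ is just Tonelli combined with $\pi_{V|U}(v|u)\pi_U(u)=\pi_{U|V}(u|v)\pi_V(v)$, and its finiteness is exactly the standing assumption~\eqref{traceclass}; invoking Theorem~\ref{basic} here is unnecessary (that theorem identifies the integral with $s_1$, which you do not need), though harmless.
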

	\begin{proof}
		That $\int_{S_U} a(u,u) \, \mu(du) < \infty$ is equivalent to
		\begin{equation} \label{atraceclass0}
		\int_{S_U} \int_{S_V} \bigg( \int_{S_V} \frac{\pi_{U,V}(u,v')}{\pi_U(u)\pi_V(v')} r(v,dv') \bigg) \bigg( \frac{\pi_{U,V}(u,v)}{\pi_U(u)\pi_V(v)} \bigg) \pi_U(u) \pi_V(v) \, \nu(dv) \, \mu(du) < \infty \,.
		\end{equation}
		Note that
		\begin{equation} \label{atraceclass1}
		\int_{S_U} \bigg( \frac{\pi_{U,V}(u,v)}{\pi_U(u)\pi_V(v)} \bigg)^2 \pi_U(u) \pi_V(v) \, \mu(du)\nu(dv) = \int_{S_U} p(u,u) \, \mu(du) < \infty \,,
		\end{equation}
		and by Jensen's inequality,
		\begin{equation} \label{atraceclass2}
		\begin{aligned}
		&\int_{S_U} \int_{S_V} \bigg( \int_{S_V} \frac{\pi_{U,V}(u,v')}{\pi_U(u)\pi_V(v')} r(v,dv') \bigg)^2 \pi_U(u) \pi_V(v) \,\nu(dv) \, \mu(du) \\
		&\leq \int_{S_U} \int_{S_V} \int_{S_V} \bigg( \frac{\pi_{U,V}(u,v')}{\pi_U(u)\pi_V(v')} \bigg)^2 r(v,dv') \pi_U(u) \pi_V(v) \,\nu(dv) \, \mu(du) \\
		&= \int_{S_U} \int_{S_V} \bigg( \frac{\pi_{U,V}(u,v')}{\pi_U(u)\pi_V(v')} \bigg)^2 \pi_U(u) \pi_V(v') \,\nu(dv') \, \mu(du) \\
		&= \int_{S_U} p(u,u) \, \mu(du) \\
		& < \infty \,.
		\end{aligned}
		\end{equation}
		The inequality \eqref{atraceclass0} follows from \eqref{atraceclass1}, \eqref{atraceclass2}, and the Cauchy-Schwarz inequality.
	\end{proof}
	
        Combining Proposition~\ref{traceclassA} and
        Theorem~\ref{basic} leads to the following result: If $P$ is
        trace-class and $\tilde{p}(u,\cdot)$ is representable, then
        $\tilde{P}$ is also trace-class.  This is a generalization of
        \pcite{khare2011spectral} Theorem~1, which states that, under
        a condition on $s(v,dv')$ that implies representability, the
        trace-class-ness of $P$ implies that of $\tilde{P}$.

        We now develop a classical Monte Carlo estimator of
        $\int_{S_U} a(u,u) \, \mu(du)$.  Let $\omega:S_V \to
        [0,\infty)$ be a pdf that is almost everywhere positive.  We
        will exploit the following representation of the integral of
        interest:
	\begin{equation}\label{rep}
          \int_{S_U} a(u,u) \, \mu(du) = \int_{S_V} \int_{S_U} \bigg(\frac{\pi_{V|U}(v|u)}{\omega(v)}\bigg) \Big(\int_{S_V} \pi_{U|V}(u|v') r(v,dv') \Big) \omega(v) \, \mu(du) \,\nu(dv) \,.
	\end{equation}
	Clearly,
	\[
	\eta(u,v) := \Big(\int_{S_V} \pi_{U|V}(u|v') r(v,dv') \Big)
        \omega(v)
	\]
	defines a pdf on $S_U \times S_V$, and if $(U^*,V^*)$ has
        joint pdf $\eta(\cdot,\cdot)$, then
	\[
	\int_{S_U} a(u,u) \, \mu(du) = \mathbb{E} \bigg(
        \frac{\pi_{V|U}(V^*|U^*)}{\omega(V^*)} \bigg) \,.
	\]
        Therefore, if $\{(U^*_i,V^*_i)\}_{i=1}^{N}$ are iid random
        elements from $\eta(\cdot,\cdot)$, then
	\begin{equation} 
           \label{estimator}
           \frac{1}{N} \sum_{i=1}^{N} \frac{\pi_{V|U}(V^*_i|U^*_i)}{\omega(V^*_i)}
	\end{equation}
        is a strongly consistent and unbiased estimator of $\int_{S_U}
        a(u,u) \, \mu(du)$.  This is the Monte Carlo formula that is
        central to our discussion.

        Of course, we are mainly interested in the cases $a(u,\cdot) =
        p^{(k)}(u,\cdot)$ or $a(u,\cdot) = \tilde{p}^{(k)}(u,\cdot)$.
        We now develop algorithms for drawing from $\eta(\cdot,\cdot)$
        in these two situations.  First, assume $a(u,\cdot) =
        p^{(k)}(u,\cdot)$.  If $k=1$, then $r(u,\cdot)$ is the
        kernel of the identity operator, and
	\[
	\eta(u,v) = \pi_{U|V}(u|v) \omega(v) \,.
	\]
        If $k \ge 2$, then $r(v,dv') = q^{(k-1)}(v,v') \, dv'$, and
	\begin{equation*}
          \eta(u,v) = \Big( \int_{S_V} \pi_{U|V}(u|v') q^{(k-1)}(v,v')
          \, \nu(dv') \Big) \omega(v) = \Big( \int_{S_U} p^{(k-1)}(u',u)
          \pi_{U|V}(u'|v) \, \mu(du') \Big) \omega(v) \,.
	\end{equation*}
        Thus, when $k \ge 2$, we can draw from $\eta(u,v)$ as follows:
        Draw $V^* \sim \omega(\cdot)$, then draw $U' \sim
        \pi_{U|V}(\cdot|v^*)$, then draw $U^* \sim
        p^{(k-1)}(u',\cdot)$, and return $(u^*,v^*)$.  Of course, we
        can draw from $p^{(k-1)}(u',\cdot)$ by simply running $k-1$
        iterations of the original DA algorithm from starting value
        $u'$.  We formalize all of this in Algorithm 1.

	\baro \vspace*{2mm}
	\noindent {\rm Algorithm 1: Drawing $(U^*,V^*) \sim \eta(\cdot,\cdot)$ when
          $a(\cdot,\cdot) = p^{(k)}(\cdot,\cdot)$.}
	\begin{enumerate}
		\item Draw $V^*$ from $\omega(\cdot)$.
		\item Given $V^*=v^*,$ draw $U'$ from
                  $\pi_{U|V}(\cdot|v^*)$.
		\item If $k=1$, set $U^*=U'$.  If $k \geq 2$, given
                  $U'=u'$, draw $U^*$ from $p^{(k-1)}(u', \cdot)$ by
                  running $k-1$ iterations of the DA algorithm.
	\end{enumerate}
        \vspace*{-4mm} \barba
        \smallskip

        \noindent Similar arguments lead to the following algorithm
        for the sandwich algorithm

        \baro \vspace*{2mm}
	\noindent {\rm Algorithm 1S: Drawing $(U^*,V^*) \sim \eta(\cdot,\cdot)$ when
          $a(\cdot,\cdot) = \tilde{p}^{(k)}(\cdot,\cdot)$}
	\begin{enumerate}
		\item Draw $V^*$ from $\omega(\cdot)$.
		\item Given $V^*=v^*$, draw $V'$ from $s(v^*,\cdot)$.
		\item Given $V'=v'$ draw $U'$ from
                  $\pi_{U|V}(\cdot|v')$.
		\item If $k=1$, set $U^*=U'$.  If $k \geq 2$, given
                  $U'=u'$, draw $U^*$ from $\tilde{p}^{(k-1)}(u',
                  \cdot)$ by running $k-1$ iterations of the sandwich
                  algorithm.
	\end{enumerate}
        \vspace*{-4mm}
	\barba
	\noindent It is important to note that we do not need to know
        the representing conditionals $\pi_{U|\tilde{V}}(\cdot|v)$ and
        $\pi_{\tilde{V}|U}(\cdot|u)$ from~\eqref{representable} in
        order to run Algorithm 1S.
	
	As with all classical Monte Carlo techniques, a key element in
        successful implementation is a finite variance.  Define
	\[
	D^2 = \mathrm{var} \bigg(
        \frac{\pi_{V|U}(V^*|U^*)}{\omega(V^*)} \bigg) \,.
	\]
        Of course, $D^2 < \infty$ if and only if
	\begin{equation} \label{moment2} \int_{S_V} \int_{S_U}
          \bigg(\frac{\pi_{V|U}(v|u)}{\omega(v)}\bigg)^2 \eta(u,v) \,
          \mu(du) \,\nu(dv) < \infty \,.
	\end{equation}
	The following theorem provides a sufficient condition for
        finite variance.
	
	\begin{theorem} \label{secondmoment}
		The variance, $D^2$, is finite if
		\begin{equation} \label{momentcond}
		\int_{S_V} \int_{S_U} \frac{\pi_{V|U}^3(v|u) \pi_{U|V}(u|v)} {\omega^2(v)} \, \mu(du) \,\nu(dv) < \infty.
		\end{equation}
	\end{theorem}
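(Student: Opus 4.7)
The plan is to reduce the finite variance condition \eqref{moment2} to the hypothesis \eqref{momentcond} via a carefully chosen Cauchy--Schwarz split, with the residual factor tamed by Jensen's inequality together with the $\pi_V$-invariance of $r(v,\cdot)$ and the trace-class condition \eqref{traceclass}.

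First, substituting the definition of $\eta$ into \eqref{moment2}, the goal becomes to show that
\[
I := \int_{S_V}\int_{S_U} \frac{\pi_{V|U}^2(v|u)}{\omega(v)}\bigg(\int_{S_V}\pi_{U|V}(u|v')\,r(v,dv')\bigg)\,\mu(du)\,\nu(dv) < \infty.
\]
Write the integrand as a product $a(u,v)\,b(u,v)$ with
\[
a(u,v) = \frac{\pi_{V|U}^{3/2}(v|u)\,\pi_{U|V}^{1/2}(u|v)}{\omega(v)}, \qquad b(u,v) = \frac{\pi_{V|U}^{1/2}(v|u)}{\pi_{U|V}^{1/2}(u|v)}\int_{S_V}\pi_{U|V}(u|v')\,r(v,dv'),
\]
chosen precisely so that $a^2$ reproduces the integrand in \eqref{momentcond}. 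Cauchy--Schwarz then gives $I \le \bigl(\int\int a^2\bigr)^{1/2}\bigl(\int\int b^2\bigr)^{1/2}$, and the first factor is finite by assumption.

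It remains to bound $\int\int b^2$. Using the joint-density identity $\pi_{V|U}(v|u)/\pi_{U|V}(u|v) = \pi_V(v)/\pi_U(u)$ and Jensen's inequality on the inner integral against the probability measure $r(v,\cdot)$,
\[
\int_{S_V}\int_{S_U} b^2(u,v)\,\mu(du)\,\nu(dv) \le \int_{S_V}\int_{S_U} \frac{\pi_V(v)}{\pi_U(u)}\int_{S_V}\pi_{U|V}^2(u|v')\,r(v,dv')\,\mu(du)\,\nu(dv).
\]
By Fubini and the $\pi_V$-invariance of $r$, $\int_{S_V}\pi_V(v)\,r(v,dv')\,\nu(dv) = \pi_V(v')\,\nu(dv')$, so after switching the order of integration the right-hand side collapses to
\[
\int_{S_U}\int_{S_V}\frac{\pi_{U|V}(u|v')\,\pi_V(v')}{\pi_U(u)}\,\pi_{U|V}(u|v')\,\nu(dv')\,\mu(du) = \int_{S_U}\int_{S_V}\pi_{V|U}(v'|u)\,\pi_{U|V}(u|v')\,\nu(dv')\,\mu(du) = \int_{S_U} p(u,u)\,\mu(du),
\]
which is finite by \eqref{traceclass}. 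Hence $\int\int b^2 < \infty$, Cauchy--Schwarz yields $I < \infty$, and therefore $D^2 < \infty$.

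The only real obstacle is finding the correct Cauchy--Schwarz split: it has to expose \eqref{momentcond} on one side while leaving a residual whose integral telescopes, via $\pi_V$-invariance of $r$, back to the trace-norm $\int p(u,u)\,\mu(du)$. Once the exponents $3/2$ and $1/2$ on $\pi_{V|U}$ and $\pi_{U|V}$ in $a$ are chosen to match the hypothesis, the remainder of the argument is essentially forced by Jensen and Fubini.
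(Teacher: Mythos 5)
Your proof is correct and follows essentially the same route as the paper's: the paper also splits the integrand via Cauchy--Schwarz so that one factor squared reproduces \eqref{momentcond} (it writes the same split in weighted form against the measure $\pi_U(u)\pi_V(v)\,\mu(du)\,\nu(dv)$), and it likewise controls the other factor by Jensen's inequality over $r(v,dv')$ plus the $\pi_V$-invariance of $r$, collapsing it to $\int_{S_U} p(u,u)\,\mu(du) < \infty$. The only difference is cosmetic: you fold the reference densities into the functions $a$ and $b$ rather than working in $L^2(\pi_U\times\pi_V)$.
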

	\begin{proof}
          First, note that~\eqref{moment2} is equivalent to
		\begin{equation*}
		\int_{S_V} \int_{S_U} \bigg(\frac{\pi^2_{V|U}(v|u)}{\pi_V(v)\omega(v)}\bigg) \bigg( \frac{\int_{S_V} \pi_{U|V}(u|v') r(v,dv')}{\pi_U(u)} \bigg) \pi_U(u)\pi_V(v) \, \mu(du)\,\nu(dv) < \infty.
		\end{equation*}
		Now, it follows from \eqref{momentcond} that
		\begin{equation} \label{moment2.1}
		\int_{S_V} \int_{S_U} \bigg(\frac{\pi^2_{V|U}(v|u)}{\pi_V(v)\omega(v)}\bigg)^2 \pi_U(u)\pi_V(v) \, \mu(du)\,\nu(dv) < \infty.
		\end{equation}
		Moreover, by Jensen's inequality,
		\begin{equation} \label{moment2.2}
		\begin{aligned}
		&\int_{S_V} \int_{S_U} \bigg( \frac{\int_{S_V} \pi_{U|V}(u|v') r(v,dv')}{\pi_U(u)} \bigg)^2 \pi_U(u)\pi_V(v) \, \mu(du)\,\nu(dv) \\
		& \leq \int_{S_V} \int_{S_U} \int_{S_V} \bigg( \frac{\pi_{U|V}(u|v')}{\pi_U(u)} \bigg)^2 r(v,dv') \pi_U(u)\pi_V(v) \, \mu(du)\,\nu(dv) \\
		&= \int_{S_V} \int_{S_U} \bigg( \frac{\pi_{U|V}(u|v')}{\pi_U(u)} \bigg)^2 \pi_U(u)\pi_V(v') \, \mu(du)\,\nu(dv') \\
		&= \int_{S_U} p(u,u) \, \mu(du) \\
		& < \infty.
		\end{aligned}
		\end{equation}
		The conclusion now follows from \eqref{moment2.1},
                \eqref{moment2.2}, and Cauchy-Schwarz.
	\end{proof}
	
	Theorem~\ref{secondmoment} implies that an $\omega(\cdot)$
        with heavy tails is more likely to result in finite variance
        (which is not surprising). It might seem natural to take
        $\omega(\cdot) = \pi_V(\cdot)$.  However, in practice, we are
        never able to draw from $\pi_V(\cdot)$.  (If we could do that,
        we would not need MCMC.)  
        Moreover, setting $\omega(\cdot)$ to be $\pi_V(\cdot)$ does not always result in a finite variance.
        On the other hand, it can be beneficial to use $\omega(\cdot)$s resembling $\pi_V(\cdot)$, as we argue in Section~\ref{efficiency}.

	When an appropriate $\omega(\cdot)$ is difficult to find, one can construct an alternative Monte Carlo estimator as follows.
	Let $\psi:S_U \to [0,\infty)$ be a pdf that is positive almost
        everywhere. The following dual of \eqref{rep} may also be used
        to represent $\int_{S_U}a(u,u) \, \mu(du)$:
	\begin{equation*} 
          \int_{S_U} a(u,u) \, \mu(du) = \int_{S_U}
          \int_{S_V} \int_{S_V} \frac{\pi_{U|V}(u|v)}{\psi(u)}
          r(v',dv) \pi_{V|U}(v'|u) \psi(u) \, \nu(dv') \, \mu(du) \,.
	\end{equation*}
	Now suppose that $\{(U^*_i,V^*_i)\}_{i=1}^N$ are iid from
	\[
	\zeta(u,v) \, \mu(du) \, \nu(dv) = \bigg( \int_{S_V} r(v',dv)
        \pi_{V|U}(v'|u) \, \nu(dv') \bigg) \psi(u) \, \mu(du) \,.
	\]
        The analogue of \eqref{estimator} is the following classical
        Monte Carlo estimator of $\int_{S_U} a(u,u) \, \mu(du)$:
	\begin{equation} \label{estimator2} \frac{1}{N} \sum_{i=1}^{N}
          \frac{\pi_{U|V}(U^*_i|V^*_i)}{\psi(U^*_i)} \,.
	\end{equation}
        We now state the obvious analogues of Algorithms 1 and 1S.

	\baro \vspace*{2mm}
	\noindent {\rm Algorithm 2: Drawing $(U^*,V^*) \sim \zeta(\cdot,\cdot)$ when $a(\cdot,\cdot) = p^{(k)}(\cdot,\cdot)$.}
	\begin{enumerate}
		\item Draw $U^*$ from $\psi(\cdot)$.
                \item If $k=1$, set $U'=U^*$.  If $k \geq 2$, given
                  $U^*=u^*$, draw $U'$ from $p^{(k-1)}(u^*, \cdot)$.
                \item Given $U'=u'$, draw $V^*$ from
                  $\pi_{V|U}(\cdot|u')$.
	\end{enumerate}
        \vspace*{-4mm} \barba 
        \smallskip

	\baro \vspace*{2mm}
	\noindent {\rm Algorithm 2S: Drawing $(U^*,V^*) \sim
          \zeta(\cdot,\cdot)$ when $a(\cdot,\cdot) =
          \tilde{p}^{(k)}(\cdot,\cdot)$.}
	\begin{enumerate}
		\item Draw $U^*$ from $\psi(\cdot)$.
		\item If $k=1$, set $U'=U^*$.  If $k \geq 2$, given
                  $U^*=u^*$, draw $U'$ from $\tilde{p}^{(k-1)}(u^*, \cdot)$.
                \item Given $U'=u'$, draw $V'$ from
                  $\pi_{V|U}(\cdot|u')$.
                \item Given $V'=v'$, draw $V^*$ from $s(v',\cdot)$.
	\end{enumerate}
        \vspace*{-4mm} \barba

	Let $D'^2$ be the variance of $\pi_{U|V}(U^*|V^*)/\psi(U^*)$ under~$\zeta$. To ensure that it's finite,
        we only need
	\begin{equation} \label{moment2dual} \int_{S_U} \int_{S_V}
          \int_{S_V} \bigg(\frac{\pi_{U|V}(u|v)}{\psi(u)}\bigg)^2
          r(v',dv) \pi_{V|U}(v'|u) \psi(u) \,\nu(dv') \, \mu(du) <
          \infty \,.
	\end{equation}
	The following result is the analogue of Theorem~\ref{secondmoment}.
	\begin{corollary} \label{secondmomentdual} The variance, $D'^2$, is finite if
          \begin{equation} \label{momentconddual} \int_{S_U}
            \int_{S_V} \frac{\pi_{U|V}^3(u|v) \pi_{V|U}(v|u)}
            {\psi^2(u)} \,\nu(dv) \, \mu(du) < \infty \,.
		\end{equation}
	\end{corollary}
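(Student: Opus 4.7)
My plan is to follow the same strategy as the proof of Theorem~\ref{secondmoment}: reduce the moment condition \eqref{moment2dual} to a Cauchy-Schwarz decomposition whose two pieces are controlled, respectively, by the hypothesis \eqref{momentconddual} and by the trace-class assumption \eqref{traceclass}.

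The key ingredient is an auxiliary probability measure on $S_U \times S_V \times S_V$,
\[
d\mathbb{Q}(u,v,v') := \pi_U(u)\,\mu(du)\cdot \pi_V(v')\,\nu(dv')\cdot r(v',dv).
\]
Because $r$ has invariant density $\pi_V$, the marginal of $\mathbb{Q}$ on $(u,v)$ is $\pi_U(u)\pi_V(v)\,\mu(du)\nu(dv)$, while its marginal on $(u,v')$ is $\pi_U(u)\pi_V(v')\,\mu(du)\nu(dv')$. A direct algebraic check shows the measure on the left side of \eqref{moment2dual} equals $F(u,v)\,G(u,v')\,d\mathbb{Q}(u,v,v')$ with
\[
F(u,v) = \frac{\pi_{U|V}^2(u|v)}{\psi(u)\,\pi_U(u)}, \qquad G(u,v') = \frac{\pi_{V|U}(v'|u)}{\pi_V(v')}.
\]
Applying Cauchy-Schwarz on $\mathbb{Q}$ reduces the problem to showing that $\int F^2\,d\mathbb{Q}$ and $\int G^2\,d\mathbb{Q}$ are both finite. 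Using the $(u,v)$-marginal of $\mathbb{Q}$ together with the identity $\pi_{U|V}(u|v)\pi_V(v)/\pi_U(u) = \pi_{V|U}(v|u)$, the first integral matches exactly the hypothesis \eqref{momentconddual}. Using the $(u,v')$-marginal of $\mathbb{Q}$ together with the representation $p(u,u) = \int \pi_{U,V}^2(u,v)/[\pi_U(u)\pi_V(v)]\,\nu(dv)$, the second integral collapses to $\int_{S_U} p(u,u)\,\mu(du)$, which is finite by \eqref{traceclass}.

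The main obstacle is choosing a factorization that neatly separates the $r$-dependent and $r$-independent parts of the integrand. In the primal proof, $r$ can be integrated against $\pi_V$ after a Jensen step on a squared conditional integral, because $r$ and the factor it multiplies share the same output variable $v'$. In the dual, the roles of input and output of $r$ are swapped, so keeping $v'$ as an explicit coordinate and exploiting the twin product marginals of $\mathbb{Q}$ is what makes the argument go through. Once this factorization is pinned down, the remaining manipulations are routine and precisely parallel those in the proof of Theorem~\ref{secondmoment}.
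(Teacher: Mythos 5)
Your argument is correct and is essentially the paper's proof in repackaged form: the paper rewrites the left-hand side of \eqref{moment2dual} against $\pi_U\times\pi_V$, applies Cauchy--Schwarz to the same two factors you call $F$ and $G$ (with the $r$-average attached to the first), and then uses Jensen's inequality together with the $\pi_V$-invariance of $r$ to reduce the two pieces to exactly your bounds, namely \eqref{momentconddual} and $\int_{S_U} p(u,u)\,\mu(du)<\infty$. Your lifted measure $\mathbb{Q}$ on $S_U\times S_V\times S_V$ merely absorbs that Jensen step into a single Cauchy--Schwarz --- since $G$ does not depend on $v$, squaring $F$ under $\mathbb{Q}$ and passing to the $(u,v)$-marginal reproduces the Jensen-plus-invariance computation --- so the two proofs are the same argument.
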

	\begin{proof}
          Note that the left hand side of~\eqref{moment2dual} is equal
          to
		\[
		\int_{S_U} \int_{S_V} \bigg(\int_{S_V}
                \frac{\pi_{U|V}^2(u|v)}{\psi(u)\pi_U(u)} r(v',dv)
                \bigg) \bigg( \frac{\pi_{V|U}(v'|u)}{\pi_V(v')} \bigg)
                \pi_U(u)\pi_V(v') \,\nu(dv') \, \mu(du) \,.
		\]
		Apply the Cauchy-Schwarz inequality, then utilize
                Jensen's inequality to get rid of $r(v',dv),$ and
                finally make use of~\eqref{momentconddual} and the
                fact that $P$ is trace-class.
	\end{proof}

	Typically, it's easy to select a good sampling density $\omega(\cdot)$ for Algorithm~1 when $S_V$ is low dimensional, or to select a good $\psi(\cdot)$ for Algorithm~2 when $S_U$ is low dimensional.
	For DA algorithms used in Bayesian models, it's often the case that $\mbox{dim} (S_U) = p$, and $\mbox{dim} (S_V) = n$, where~$p$ and~$n$ are, respectively, the number of unknown parameters in the model and the number of observations.
	When this is the case, the estimator~\eqref{estimator} is likely to be efficient when~$n$ is small, while~\eqref{estimator2} is likely to be efficient when~$p$ is small.

	Suppose that we have obtained estimates of $s_k$ and $s_{k-1}$ based on~\eqref{estimator} or~\eqref{estimator2}, call them $s^*_k$ and $s^*_{k-1}$. Then $u^*_k = (s^*_k-1)^{1/k}$ and $l^*_k = (s^*_k-1)/(s^*_{k-1}-1)$ serve as point estimates of $u_k$ and $l_k$, respectively. When our estimators have finite variances, we can acquire, via the delta method, confidence intervals for $u_k$ and $l_k$. Assume that a confidence interval for $l_k$ is $(a_k,b_k)$ and a confidence interval for $u_k$ is $(c_k, d_k)$, then $(a_k, d_k)$ is an interval estimate for $\lambda_1$. Interval estimates of $\tilde{\lambda}_1$ can be derived in a similar fashion.
	
	It's worth pointing out that $u_k$ is a nontrivial upper bound on $\lambda_1 \in [0,1)$ only if $s_k < 2$. 
	The parameter~$k$ can be determined sequentially.
	Take Algorithm~1 for example. 
	Suppose that we have drawn~$N$ iid copies of $(U^*, V^*)$ from $\eta(\cdot,\cdot)$ with $a(\cdot, \cdot) = p^{(k)}(\cdot,\cdot)$, but find that $s^*_k$ is not small enough for our purposes.
	Since $s_k$ is decreasing in~$k$, we wish to increase~$k$ by a positive integer~$j$.
	To draw $(U^{**}, V^{**})$ from $\eta(\cdot,\cdot)$ with $a(\cdot, \cdot) = p^{(k+j)}(\cdot,\cdot)$, we only need to set $V^{**} = V^*$, and draw $U^{**}$ from $p^{(j)}(U^*, \cdot)$. 
	This procedure can be repeated until the estimated power sum $s^*_{k+j}$ is decreased to a satisfactory value.
	More guidance on the choice of~$k$ can be found in the next section.

	\section{Efficiency of the algorithm} \label{efficiency}
	
	To obtain an interval estimate of $\lambda_1$ based on~\eqref{estimator} or~\eqref{estimator2}, one needs to run~$N$ iterations of Algorithm~1 or~2. If the time needed to simulate one step of the DA chain is $\tau$, then the time needed to run $N$ iterations of Algorithm 1 or 2 is approximately $kN\tau$. Note that significant speedup can be achieved through parallel computing, since the~$N$ iterations are carried out independently. Given $k$ and $N$, the accuracy of the estimate depends on two factors:
	1. The distance between $l_k$ and $u_k$, and 2. The errors in the estimates, $l^*_k$ and $u^*_k$.  We now briefly analyze these two factors, and give some additional guidelines regarding the choice of $\omega(\cdot)$ and $\psi(\cdot)$.
	
	As before, suppose that
	\[
	1 = \lambda_0 > \lambda_1 = \lambda_2 = \cdots = \lambda_m > \lambda_{m+1} \geq \cdots \geq \lambda_{\kappa} > 0
	\]
	for some $m < \infty$. Clearly, as $k \to \infty$,
	\[
	s_k - 1 = \lambda_1^k \left( m + O \left( \lambda_{m+1}^k/\lambda_1^k \right) \right).
	\]
	Hence, as $k \to \infty$,
	\[
	l_k := \frac{s_k-1}{s_{k-1}-1} = \lambda_1 \left( 1 + O \left( \lambda_{m+1}^{k-1}/\lambda_1^{k-1} \right) \right),
	\]
	and
	\[
	\begin{aligned}
	u_k &:= (s_k-1)^{1/k}\\
	&= \lambda_1 m^{1/k} \left(1 + O\left( k^{-1} \lambda_{m+1}^k/\lambda_1^k \right) \right) \\
	&= \lambda_1 \left( 1 + (\log m) O\left(k^{-1}\right) \right) \left(1 + O\left( k^{-1} \lambda_{m+1}^k/\lambda_1^k \right) \right).\\
	&= \left\{
	\begin{array}{@{}ll@{}}
	\lambda_1 \left(1 + O\left( k^{-1} \lambda_2^k/\lambda_1^k \right) \right)  & m=1 \\
	\lambda_1 \left( 1 + O\left(k^{-1}\right) \right) & m > 1. \,
	\end{array} \right. 
	\end{aligned}
	\]
	Depending on whether $m=1$ or not, $u_k-l_k$ decreases at either a geometric or polynomial rate as~$k$ grows.
	
	The errors of $l^*_k$ and $u^*_k$ arise from those of $s^*_k$ and $s^*_{k-1}$. We now consider the estimator~\eqref{estimator2} for estimating $s_k$. Its variance is given by
	\[
	\begin{aligned}
	\frac{D'^2}{N} &= \frac{1}{N} \left\{ \int_{S_U}\int_{S_V}\int_{S_V} \bigg(\frac{\pi_{U|V}(u|v)}{\psi(u)}\bigg)^2
	r(v',dv) \pi_{V|U}(v'|u) \psi(u) \,\nu(dv') \, \mu(du) - s_k^2 \right\}\\
	&= \frac{1}{N} \left\{ \int_{S_U}\int_{S_V} \int_{S_U} \frac{\pi_{U|V}^2(u|v)}{\psi(u)} \pi_{V|U}(v|u') p^{(k)}(u,u') \, \mu(du') \, \nu(dv) \, \mu(du) - s_k^2 \right\}.
	\end{aligned}
	\]
	Note that
	\[
	p_k\left( (u,v), (u',v') \right) := \pi_{V|U}(v'|u') p^{(k)}(u,u')
	\]
	gives the $k$-step Mtd of a Gibbs chain whose stationary pdf is $\pi_{U,V}(\cdot,\cdot)$. Thus, under suitable conditions, for almost any $u \in S_U$,
	\[
	\begin{aligned}
	\lim\limits_{k \to \infty} s_k(u) &:= \lim\limits_{k \to \infty} \int_{S_V} \int_{S_U} \frac{\pi_{U|V}^2(u|v)}{\psi(u)} \pi_{V|U}(v|u') p^{(k)}(u,u') \, \mu(du') \, \nu(dv) \\
	&= \int_{S_V} \int_{S_U} \frac{\pi_{U|V}^2(u|v)}{\psi(u)} \pi_{U,V}(u',v) \, \mu(du') \, \nu(dv) \\
	&= \frac{p(u,u) \pi_U(u)}{\psi(u)}.
	\end{aligned}
	\]
	As $k \to \infty$, we expect
	\[
	D'^2 = \int_{S_U} s_k(u) \, \mu(du) - s_k^2 \to \int_{S_U} \frac{p(u,u) \pi_U(u)}{\psi(u)} \, \mu(du) - 1.
	\]
	Suppose that $\psi(u) \approx \pi_U(u)$, then heuristically,
	\[
	\int_{S_U} \frac{p(u,u) \pi_U(u)}{\psi(u)} \, \mu(du) - 1 \approx \int_{S_U} p(u,u) \mu(du) - 1 = s_1 - 1.
	\]
	Thus, if the sum of~$P$'s eigenvalues, $s_1$, is relatively small, we recommend picking $\psi(\cdot)$s that resemble $\pi_U(\cdot)$, with possibly heavier tails (to ensure that the moment condition~\eqref{momentconddual} holds). By a similar argument, when using the estimator~\eqref{estimator}, picking $\omega(\cdot)$s that resemble $\pi_V(\cdot)$ is likely to control $D^2$ around $s_1 - 1$ for large~$k$s.
	
	While (under suitable conditions) the variance of $s_k^*$ converges to a constant as $k \rightarrow \infty$, this is not the case for $u^*_k$ and $l^*_k$ (because $u_k$ and $l_k$ are non-linear in $s_k$ and $s_{k-1}$). In fact, using the delta method, one can show that these variances are unbounded. Thus, there's a trade-off between decreasing $u_k - l_k$ (by increasing~$k$) and controlling the errors of $u^*_k$ and $l^*_k$. We do not recommend increasing~$k$ indefinitely. As long as~$k$ is large enough so that $s_k - 1$ is significantly smaller than~$1$, $u_k$ serves as a non-trivial (and often decent) upper bound for $\lambda_1$.

	\section{Examples} \label{illustration} In this section,
        we apply our Monte Carlo technique to several common Markov
        operators.  In particular, we examine one toy Markov chain,
        and two practically relevant Monte Carlo Markov chains.  In
        the two real examples, we are able to take advantage of
        existing trace-class proofs to establish
        that~\eqref{momentcond} (or~\eqref{momentconddual}) hold for
        suitable $\omega(\cdot)$ (or $\psi(\cdot)$).
	
	\subsection{Gaussian chain}
	We begin with a toy example.  Let $S_U=S_V=\mathbb{R},$
        $\pi_U(u) \propto \exp(-u^2),$ and
	\[
	\pi_{V|U}(v|u) \propto \exp \Big\{ -4\Big(v-\frac{u}{2}\Big)^2
        \Big\} \,.
	\]
	Then
	\[
	\pi_{U|V} (u|v) \propto \exp \{ -2(u-v)^2 \} \,.
	\]
	This leads to one of the simplest DA chains known.  Indeed,
        the Mtd,
	\[
	p(u,\cdot) = \int_{\mathbb{R}} \pi_{U|V}(\cdot|v)
        \pi_{V|U}(v|u) \, dv \,, \; u \in S_U \,,
	\]
	can be evaluated in closed form, and turns out to be a normal
        pdf.  The spectrum of the corresponding Markov operator, $P$,
        has been studied thoroughly \citep[see
        e.g.][]{diaconis2008gibbs}.  It is easy to verify
        that~\eqref{traceclass} holds, so~$P$ is trace-class.  In
        fact, $\kappa = \infty$, and for any non-negative integer~$i,$
        $\lambda_i = 1/2^i$.  Thus, the second largest eigenvalue,
        $\lambda_1$, and the spectral gap,~$\delta$, are both equal to
        $1/2$.  Moreover, for any positive integer~$k$,
	\[
	s_k = \sum_{i=0}^{\infty} \frac{1}{2^{ik}} =
        \frac{1}{1-2^{-k}} \,.
	\]
	
        We now pretend to be unaware of this spectral information, and
        use \eqref{estimator} to estimate
        $\{s_k,l_k,u_k\}_{k=1}^4$.  Recall that $l_k$ and $u_k$ are
        lower and upper bounds for $\lambda_1$, respectively.  Note
        that
	\[
	\int_{\mathbb{R}} \pi^3_{V|U}(v|u) \pi_{U|V} (u|v) \, du
        \propto \exp \Big( -\frac{6}{5}v^2 \Big) \,.
	\]
        It follows that, if we take $\omega(v) \propto \exp(-v^2/2)$,
        then \eqref{momentcond} holds, and our estimator of $s_k$ has
        finite variance.  We use a Monte Carlo sample size of $N = 1
        \times 10^5$ to form our estimates, and the results are shown
        in Table~\ref{gaussian}.
        
        \begin{table}[!h]
           \begin{center} \caption{Estimated power sums of eigenvalues for the Gaussian chain} \label{gaussian}
             \medskip
			\begin{tabular}{ccccc} 
                          $k$ & Est. $s_k$ & Est. $D/\sqrt{N}$ & Est. $l_k$ & Est. $u_k$ \\ \hline
                          $1$ & $1.996$ & $0.004$ & $0.000$ & $0.996$ \\
                          $2$ & $1.331$ & $0.004$ & $0.333$ & $0.575$ \\
                          $3$ & $1.142$ & $0.004$ & $0.429$ & $0.522$ \\
                          $4$ & $1.068$ & $0.004$ & $0.482$ & $0.511$ \\
			\end{tabular}
            \end{center}
         \end{table}

         Note that the estimates of the $s_k$s are quite good.  We then
         construct $95\%$ confidence intervals (CIs) for $l_4$ and
         $u_4$ via the delta method, and the results are
         $(0.442,0.522)$ and $(0.498,0.524)$, respectively.
         
         \begin{figure}
         	\centering
         	\begin{subfigure}[t]{0.3\textwidth}
         		\includegraphics[width=\textwidth]{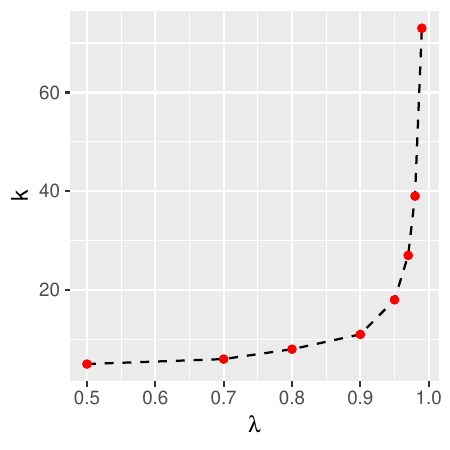}
         		\caption{The~$k$s used for different values of~$\lambda$.	
         	} \label{fig:ClosingSpectralGap0}
         	\end{subfigure}
         	\hspace{0.5cm}
         	\begin{subfigure}[t]{0.4\textwidth}
         		\includegraphics[width=\textwidth]{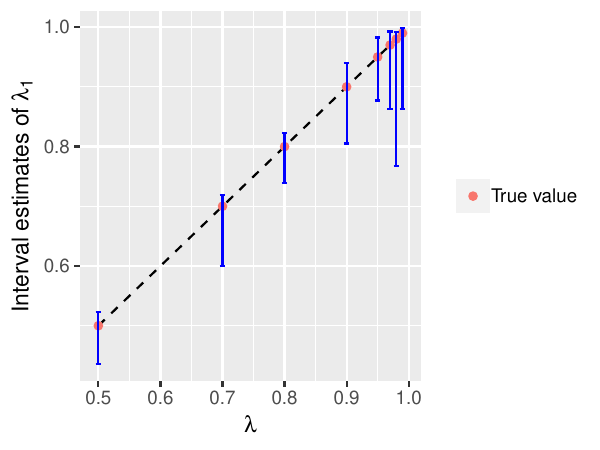}
         		\caption{Interval estimates for $\lambda_1$.} \label{fig:ClosingSpectralGap}
         	\end{subfigure}
         	\caption{Spectral gap estimation for the Gaussian chain for different~$\lambda$s}
         \end{figure}
         
         We now add an additional parameter to our toy example in
         order to study the effect of a closing spectral gap on our method.
         In particular, let
         $\pi_{V|U}(\cdot|u)$, $u \in \mathbb{R}$, be the pdf of
         $\mbox{N}(\lambda u, \lambda(1-\lambda)/2)$, where $\lambda \in (0,1)$.
         Note that our original example corresponds to $\lambda=1/2$.
         The eigenvalues of the resultant DA operator are $\{\lambda_i\}_{i=0}^{\infty} = \{\lambda^i\}_{i=0}^{\infty}$.
         We investigate the effectiveness of our method as $\lambda_1 = \lambda$ goes to~$1$, that is, as the spectral gap $\delta = 1-\lambda$ closes.
         To this end, consider a sequence of Gaussian chains with~$\lambda$ increasing from $0.5$ to $0.99$.
         In accordance with the discussion in Section~\ref{efficiency}, for a given~$\lambda$, we set~$\omega$ to be the density function of a $t$-distribution with similar variance as $\pi_V(\cdot)$, which is the pdf of $\mbox{N}(0,\lambda/2)$.
         One can verify that \eqref{momentcond} holds for every $\lambda \in (0,1)$.
         Note that in order for $u_k = (s_k-1)^{1/k}$ to be a non-trivial upper bound on $\lambda_1$, we need $s_k < 2$.
         As~$\lambda$ increases, so does $s_k$ for any given~$k$, and thus one must increase~$k$ in order to find a useful upper bound.
         Figure~\ref{fig:ClosingSpectralGap0} shows the~$k$s used for different $\lambda$s.
         When $\lambda = 0.5$, we only need $k=4$ to get a decent result; but when $\lambda = 0.99$, $k \approx 70$ is needed.
         Recall that the time needed to run~$N$ iterations of Algorithm~1 is approximately $kN\tau$, where~$\tau$ is the time needed to simulate one step of the DA chain, which is roughly the same for any $\lambda \in (0,1)$.
         To compare the performance of our method for different~$\lambda$s, we fix $kN = 1 \times 10^6$, and compare the length of the interval estimates of $\lambda_1$.
         The results are shown in Figure~\ref{fig:ClosingSpectralGap}.
         As~$\lambda$ grows, so does~$k$, and we are forced to use a smaller sample size~$N$.
         Thus, as~$\lambda$ grows, it becomes more difficult to estimate the variances of $u_k^*$ and $l_k^*$ accurately.
         As a result, the length of the interval estimate of $\lambda_1$ becomes less stable when~$\lambda$ is near~$1$.
         This is reflected in Figure~\ref{fig:ClosingSpectralGap} by an unusually wide interval estimate at $\lambda = 0.97$.
         On the other hand, most of the interval estimates at other values of~$\lambda$ near~$1$ are reasonably well-behaved.

	\subsection{Bayesian probit regression}
	
	Let $Y_1,Y_2,\dots, Y_n$ be independent Bernoulli random
        variables with $\mathbb{P}(Y_1=1|\beta) = \Phi(x_i^T\beta),$ where $x_i,\beta
        \in \mathbb{R}^p$, and $\Phi(\cdot)$ is the cumulative distribution function of the standard normal distribution.  Take the prior on $\beta$ to be
        $\mbox{N}_p(Q^{-1}w, Q^{-1}),$ where $w \in \mathbb{R}^p$ and $Q$ is
        positive definite. The resulting posterior distribution is
        intractable, but \cite{albert1993bayesian} devised a DA
        algorithm to sample from it. Let $z=(z_1,z_2,\dots,z_n)^T$ be
        a vector of latent variables, and let $X$ be the design matrix
        whose $i$th row is $x_i^T.$ The Mtd of the Albert and Chib
        (AC) chain, $p(\beta,\cdot), \beta \in \mathbb{R}^p,$ is
        characterized by
	\[
        \pi_{U|V} (\beta|z) \propto \exp \bigg[ -\frac{1}{2} \big\{
        \beta - \big( X^TX+Q \big)^{-1} \big( w+X^Tz \big) \big\}^T
        \big( X^TX + Q \big) \big\{ \beta - \big( X^TX+Q \big)^{-1}
        \big( w+X^Tz \big) \big\} \bigg] \,,
        \]
        and
        \[
        \pi_{V|U}(z|\beta) \propto \prod_{i=1}^{n}\exp \bigg\{
        -\frac{1}{2}\big(z_i-x_i^T\beta\big)^2 \bigg\}
        I_{\mathbb{R}_+}\big((y_i-0.5)z_i \big) \,.
        \]
	The first conditional density, $\pi_{U|V}(\cdot|z)$, is a
        multivariate normal density, and the second conditional
        density, $\pi_{V|U}(\cdot|\beta)$, is a product of univariate
        truncated normal pdfs.  
        
        A sandwich step can be added to facilitate the convergence of
        the AC chain. \cite{chakraborty2016convergence} constructed a
        Haar PX-DA variant of the chain, which is a sandwich chain
        with transition density of the form~\eqref{sandwich} (see also
        \cite{roy2007convergence}). The sandwich step $s(z, dz')$ is
        equivalent to the following update: $z \mapsto z' = gz$, where
        the scalar~$g$ is drawn from the following density:
        \[
        \pi_G(g|z) \propto g^{n-1} \exp \bigg[ -\frac{1}{2} z^T\big\{ I_n - X(X^TX+Q)^{-1}X^T \big\} z g^2 + z^TX(X^TX + Q)^{-1}w g   \bigg].
        \]
        Note that this pdf is particularly easy to sample from when $w = 0$.
        
        \citet{chakraborty2016convergence}
        showed that, for the AC chain,~$P$ is trace-class when one uses a concentrated
        prior (corresponding to $Q$ having large eigenvalues). In
        fact, the following is shown to hold in their proof.
	\begin{proposition} \label{probittrace} Suppose that~$X$ is full
          rank. If all the eigenvalues of $Q^{-1/2}X^TXQ^{-1/2}$ are less
          than $7/2,$ then for any polynomial function $t:
          \mathbb{R}^p \to \mathbb{R}$,
		\[
		\int_{\mathbb{R}^p} |t(\beta)| p(\beta,\beta) \,
                d\beta < \infty \,.
		\]
	\end{proposition}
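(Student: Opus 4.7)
My plan is to reduce the claim to an explicit Gaussian integral estimate and then track where the eigenvalue condition enters. First, I would write
\[
p(\beta,\beta) = \int_{\mathbb{R}^n} \pi_{U|V}(\beta|z)\,\pi_{V|U}(z|\beta)\,dz,
\]
and exploit the fact that $\pi_{V|U}(z|\beta) = \phi_n(z-X\beta)\,I_R(z)/L(\beta)$, where $\phi_n$ is the standard multivariate normal density, $I_R$ is the sign-based truncation indicator dictated by $y$, and $L(\beta) = \prod_i \Phi((2y_i-1)x_i^T\beta)$ is the marginal likelihood. Dropping $I_R$ and completing the square in $z$ inside $\pi_{U|V}(\beta|z)\phi_n(z-X\beta)$ yields a closed-form Gaussian integral, so that
\[
p(\beta,\beta) \le \frac{C}{L(\beta)}\,\exp\!\bigl(E_1(\beta)\bigr),
\]
for an explicit quadratic form $E_1$ in $\beta$ built out of $A=X^TX+Q$ and $\tilde A=2X^TX+Q$.

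Second, I would control $1/L(\beta)$ by Mills' ratio: for each $i$, the standard bound $1/\Phi(-x) \le C(1+x)\exp(x^2/2)$ gives $1/L(\beta) \le q(\beta)\exp\!\bigl(\tfrac12 \sum_i ((2y_i-1)x_i^T\beta)_-^{\,2}\bigr)$ for some polynomial $q$. Multiplying by $|t(\beta)|$ (itself polynomial) and combining with the Gaussian factor yields a bound of the form $|t(\beta)|p(\beta,\beta) \le \tilde q(\beta)\exp(E(\beta))$ for an explicit quadratic $E$. The integral over $\beta$ then converges iff $E$ is negative definite.

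Third, I would diagonalise everything simultaneously. Passing to $\gamma = Q^{1/2}\beta$ makes $Q\mapsto I$, $X^TX\mapsto \hat B := Q^{-1/2}X^TX Q^{-1/2}$, and both $A$ and $\tilde A$ become polynomials in $\hat B$; they can be diagonalised together with $\hat B$. The negative-definiteness condition on $E$ decouples into a scalar inequality $h(b_j)<0$ for each eigenvalue $b_j$ of $\hat B$, and tracing constants through the algebra should yield the bound $b_j<7/2$ claimed in the proposition.

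The main obstacle will be getting the constant $7/2$ sharp. The naive worst-case Mills estimate above replaces $\sum_i((2y_i-1)x_i^T\beta)_-^2$ by the full quadratic $\beta^T X^TX\beta$, which (by the scalar check) only delivers $b_j<1/2$. To recover $7/2$ one must exploit that the Mills blowup only hits in directions where the signs of $x_i^T\beta$ disagree with $y_i$, and that these directions are precisely the ones where the posterior factor $\pi_U(\beta)$, or equivalently the $\tilde\pi_V$-weighted Gaussian-in-$z$ integral, is already exponentially small. Concretely, I would reformulate the integrand as $\pi_{\mathrm{prior}}(\beta)\,\phi_n(z-X\beta)^2/\bigl(L(\beta)\tilde\pi_V(z)\bigr)$ (after the identity $\pi_{U|V}\pi_{V|U} = \pi_{U,V}^{2}/(\pi_U\pi_V)$) and integrate over $\beta$ before $z$, so that the $\beta$-Gaussian integral partly cancels the Mills factor direction-by-direction. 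This is the step I expect to require the Chakraborty--Khare algebra and to deliver exactly the threshold $7/2$.
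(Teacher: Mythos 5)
There is a genuine gap, and it is exactly the part that carries the content of the proposition. The paper itself does not prove this statement: it is imported verbatim from the trace-class argument of \cite{chakraborty2016convergence}, so the only thing a self-contained proof must deliver is the explicit threshold $7/2$ for the eigenvalues of $Q^{-1/2}X^TXQ^{-1/2}$. Your proposal never gets there. The bounds you actually carry out (drop the truncation indicator $I_R(z)$, complete the square in $z$, apply the worst-case Mills bound $1/\Phi(-x)\le C(1+x)e^{x^2/2}$ to every factor of $L(\beta)$) yield, by your own admission, a strictly weaker eigenvalue condition, and the step that is supposed to recover $7/2$ is stated only as an expectation (``I expect \dots to deliver exactly the threshold $7/2$'') with the decisive algebra delegated to ``the Chakraborty--Khare algebra''---i.e.\ to the very source the paper cites in lieu of a proof. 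A plan whose quantitative core is deferred to the reference being reproved does not constitute a proof of the proposition.

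Moreover, the executed part of your argument is in tension with the proposed rescue. Discarding $I_R(z)$ at the outset is precisely what destroys the mechanism you later want to exploit: on the truncation half-lines the cross term between $z_i$ and the ``wrong-sign'' directions of $\beta$ is negative, so the $z_i$-integral of $\phi(z_i-x_i^T\beta)$ over the admissible half-line is itself of order $\Phi\big((2y_i-1)x_i^T\beta\big)$ and cancels the Mills blow-up; once you integrate $z$ over all of $\mathbb{R}^n$ you pay the full $\exp\{\beta^TX^T X\beta/2\}$ penalty and no later reordering of the $\beta$- and $z$-integrations can restore the lost cancellation within the same chain of inequalities. So the argument would have to be restructured (keeping the indicator and bounding the ratio $\phi(z_i-x_i^T\beta)/\Phi((2y_i-1)x_i^T\beta)$ jointly in $(z_i,\beta)$, then optimizing the resulting quadratic form after the $Q^{1/2}$ change of variables), not merely completed; as written, the proposal establishes the proposition only under a more restrictive eigenvalue bound, with the polynomial weight $|t(\beta)|$ being the one ingredient it does handle correctly.
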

	We will use the estimator \eqref{estimator2}.  The following
        proposition provides a class of $\psi(\cdot)$s that lead to
        estimators with finite variance.
	\begin{proposition} \label{probitpsi} Suppose the hypothesis
          in Proposition~\ref{probittrace} holds. If $\psi(\cdot)$ is
          the pdf of a $p$-variate $t$-distribution, i.e.
		\[
		\psi(\beta) \propto \bigg\{ 1 + \frac{1}{a}(\beta - b)^T \Sigma^{-1}  (\beta - b) \bigg\}^{-(a+p)/2}
		\]
		for some $b \in \mathbb{R}^p,$ positive definite
                matrix $\Sigma \in \mathbb{R}^{p \times p},$ and
                positive integer~$a,$ then the
                estimator~\eqref{estimator2} has finite variance.
	\end{proposition}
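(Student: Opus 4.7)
The plan is to apply Corollary~\ref{secondmomentdual}, which reduces the claim to verifying the moment condition
\[
\int_{\mathbb{R}^p} \int_{\mathbb{R}^n} \frac{\pi_{U|V}^3(\beta|z) \, \pi_{V|U}(z|\beta)}{\psi^2(\beta)} \, dz \, d\beta < \infty.
\]
The key structural observation I would exploit is that $\pi_{U|V}(\beta|z)$ is a multivariate normal density in $\beta$ whose covariance $(X^TX+Q)^{-1}$ does \emph{not} depend on $z$. Consequently $\pi_{U|V}(\beta|z)$ is uniformly bounded above by its peak value $M := (2\pi)^{-p/2}|X^TX+Q|^{1/2}$, so that $\pi_{U|V}^2(\beta|z) \leq M^2$ for all $(\beta,z)$. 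Pulling this bound out of the inner integral and invoking the DA representation~\eqref{DAmtd} yields
\[
\int_{\mathbb{R}^n} \pi_{U|V}^3(\beta|z) \, \pi_{V|U}(z|\beta) \, dz \leq M^2 \int_{\mathbb{R}^n} \pi_{U|V}(\beta|z) \, \pi_{V|U}(z|\beta) \, dz = M^2 \, p(\beta,\beta).
\]

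Next, since $\psi(\cdot)$ is the pdf of a multivariate $t$-distribution with integer degrees of freedom $a$,
\[
\psi^{-2}(\beta) \propto \bigg\{ 1 + \frac{1}{a}(\beta-b)^T \Sigma^{-1} (\beta - b) \bigg\}^{a+p}
\]
is a polynomial $t(\beta)$ of degree $2(a+p)$ in the entries of $\beta$. Substituting back, the double integral of interest is dominated by $M^2 \int_{\mathbb{R}^p} t(\beta) \, p(\beta,\beta) \, d\beta$, which is finite by Proposition~\ref{probittrace} applied to the polynomial $t$. This verifies~\eqref{momentconddual}, and Corollary~\ref{secondmomentdual} then gives $D'^2 < \infty$, as required.

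The only step requiring any real thought is the uniform-in-$z$ boundedness of $\pi_{U|V}(\beta|z)$, which is available here precisely because AC's conditional normal covariance is $z$-free. Without this structural fact one would have to propagate the exponential factor through the integration over $z$, and the reduction to an integral against $p(\beta,\beta)$ would be far less clean. Given this observation, the remaining work -- polynomial majorization of $\psi^{-2}$ and an appeal to the existing trace-class estimate of Chakraborty and Khare -- is routine, so I do not anticipate any substantive obstacle.
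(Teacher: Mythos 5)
Your proposal is correct and follows essentially the same route as the paper: both verify the moment condition \eqref{momentconddual} by bounding $\pi_{U|V}^3(\beta|z)\leq C\,\pi_{U|V}(\beta|z)$ uniformly in $z$ (your explicit remark that the conditional covariance $(X^TX+Q)^{-1}$ is $z$-free is exactly why the paper's constant $C$ exists), integrating out $z$ to get $p(\beta,\beta)$, noting $\psi^{-2}$ is a polynomial, and invoking Proposition~\ref{probittrace} together with Corollary~\ref{secondmomentdual}. No gaps.
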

	\begin{proof}
          Note that for every $\beta$ and $z$
		\[
		\pi^3_{U|V} (\beta|z) \leq C \pi_{U|V} (\beta|z) \,,
		\]
		where $C$ is a constant. Hence, by Proposition~\ref{probittrace}, for any polynomial
                function $t: \mathbb{R}^p \to \mathbb{R},$
		\[
		\int_{\mathbb{R}^p} \int_{\mathbb{R}^n} |t(\beta)|
                \pi^3_{U|V} (\beta|z) \pi_{V|U} (z|\beta) \,
                dz\,d\beta \leq C\int_{\mathbb{R}^p} |t(\beta)|
                p(\beta,\beta) \, d\beta < \infty.
		\]
		Since $\psi^{-2}(\cdot)$ is a polynomial function on
                $\mathbb{R}^p$, the moment
                condition~\eqref{momentconddual} holds. The result
                follows from Corollary~\ref{secondmomentdual}.
	\end{proof}
	
        As a numerical illustration, we apply our method to the Markov
        operator associated with the AC chain corresponding to the
        famous ``lupus data'' of \cite{van2001art}.  In this dataset,
        $n=55$ and $p=3$.  
        We will construct an
        asymptotically valid 95\% CI for the second largest eigenvalue, and
        this appears to be the most rigorous and detailed analysis to date of
        the spectrum of a practically relevant MCMC algorithm on an
        uncountable state space.
        As in \cite{chakraborty2016convergence}, we
        will let $w=0$ and $Q=X^TX/c$, where $c=3.499999$.  
        It can be easily
        shown that the assumptions in Proposition~\ref{probittrace}
        are met.  \cite{chakraborty2016convergence} compared the AC
        chain, $\Phi$, and its Haar PX-DA variant,~$\tilde{\Phi}$, defined a few paragraphs ago. This comparison
        was done using estimated autocorrelations.  Their results
        suggest that~$\tilde{\Phi}$ outperforms~$\Phi$
        when estimating a certain test function.  We go further and
        estimate the second largest eigenvalue of each operator.

	It can be shown that the posterior pdf, $\pi_{U}(\cdot)$, is log-concave, and thus possess a unique mode. Let $\hat{\beta}$ be the posterior mode, and $\hat{\Sigma}$
        the estimated variance of the MLE. We pick $\psi(\cdot)$ to
        be the pdf of $t_{30}(\hat{\beta},(\hat{\Sigma}^{-1}+Q)^{-1}).$ This is to say,
        for any $\beta \in \mathbb{R}^p,$
	\[
	\psi(\beta) \propto \Big\{ 1 + \frac{1}{30} (\beta-\hat{\beta})^T (\hat{\Sigma}^{-1}+Q) (\beta-\hat{\beta}) \Big\}^{-(p+30)/2}.
	\]
	By Proposition~\ref{probitpsi}, this choice of $\psi(\cdot)$
        guarantees finite variance. When~$n$ is large, $\psi(\cdot)$ is expected to resemble $\pi_U(\cdot)$. The performance of our method seems insensitive to the degrees of freedom of the $t$-distribution (which is set at 30 for illustration).
	
        We use a Monte Carlo sample size of $N = 4\times 10^5$ to
        form our estimates for the DA operator, and the results are
        shown in Table~\ref{probit}.  Asymptotic $95\%$ CIs for $l_5$
        and $u_5$ are $(0.397,0.545)$ and $(0.573,0.595)$,
        respectively.  Using a Bonferroni argument, we can state that
        asymptotically, with at least $95\%$ confidence, $\lambda_1
        \in (0.397, 0.595)$.

	\begin{table}[!h]
		\begin{center} \caption{Estimated power sums of eigenvalues for the AC chain} \label{probit}
             \medskip
			\begin{tabular}{ccccc} 
				$k$ & Est. $s_k$ & Est. $D'/\sqrt{N}$ & Est. $l_k$ & Est. $u_k$ \\ \hline
				$1$ & $6.744$ & $0.072$ & $0.000$ & $5.744$ \\
				$2$ & $2.041$ & $0.007$ & $0.181$ & $1.020$ \\
				$3$ & $1.363$ & $0.004$ & $0.349$ & $0.713$ \\
				$4$ & $1.156$ & $0.004$ & $0.430$ & $0.628$ \\
				$5$ & $1.068$ & $0.003$ & $0.436$ & $0.584$ \\
			\end{tabular}
		\end{center}
	\end{table}

	\begin{table}[!h]
		\begin{center} \caption{Estimated power sums of eigenvalues for the Haar PX-DA version of the AC chain} \label{probit2}
			\medskip
			\begin{tabular}{ccccc} 
				$k$ & Est. $\tilde{s}_k$ & Est. $D'/\sqrt{N}$ & Est. $\tilde{l}_k$ & Est. $\tilde{u}_k$ \\ \hline
				$1$ & $3.796$ & $0.012$ & $0.000$ & $1.796$ \\
				$2$ & $1.538$ & $0.004$ & $0.193$ & $0.734$ \\
				$3$ & $1.172$ & $0.004$ & $0.319$ & $0.556$ \\
				$4$ & $1.060$ & $0.003$ & $0.352$ & $0.496$ \\
				$5$ & $1.025$ & $0.003$ & $0.419$ & $0.479$ \\
			\end{tabular}
		\end{center}
	\end{table}
	
	We now consider the sandwich chain, $\tilde{\Phi}$.  It is known that the Mtd of any Haar PX-DA chain is representable
        \citep{hobert2008theoretical}.  Hence,~$\tilde{P}$ is indeed a
        DA operator.  Recall that
        $\{\tilde{\lambda}_i\}_{i=0}^{\tilde{\kappa}}, \, 0 \leq
        \tilde{\kappa} \leq \infty$, denote the decreasingly ordered
        positive eigenvalues of $\tilde{P}$.  It was shown in
        \cite{khare2011spectral} that $\tilde{\lambda}_i \leq
        \lambda_i$ for $i \in \mathbb{N}$ with at least one strict
        inequality.  For a positive integer~$k,$ 
        $\sum_{i=0}^{\tilde{\kappa}}
        \tilde{\lambda}_i^k$ is denoted by $\tilde{s}_k$.  Let $\tilde{u}_k$ and $\tilde{l}_k,$ be the respective counterparts of $u_k$ and $l_k$.  Estimates of $\tilde{s}_k,\,k=1,2,\cdots,5$ using
        $4\times 10^5$ Monte Carlo samples are given in
        Table~\ref{probit2}.  Our estimate of $\tilde{s}_1-1$ is less
        than half of $s_1-1$, implying that, in an average sense, the
        sandwich version of the AC chain reduces the nontrivial
        eigenvalues of $P$ by more than half.  Asymptotic $95\%$ CIs
        for $\tilde{l}_5$ and $\tilde{u}_5$ are $(0.321, 0.518)$ and $(0.456,0.503)$.
        Thus, asymptotically, with at least $95\%$ confidence,
        $\tilde{\lambda}_1 \in (0.321,0.503)$. The method does not detect a significant difference between $\lambda_1$ and $\tilde{\lambda}_1$.
        
        \begin{figure}
        	\centering
        	\begin{subfigure}[t]{0.4\textwidth}
        		\includegraphics[width=\textwidth]{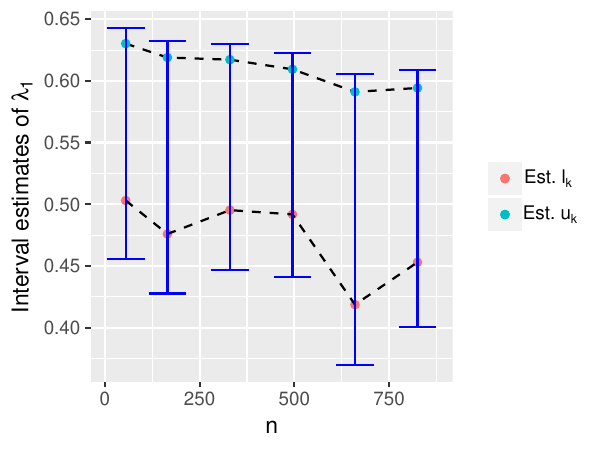}
        		\caption{Interval estimates for $\lambda_1$ for different~$n$s.} \label{fig:ProbitGrowingN}
        	\end{subfigure}
        	\hspace{0.5cm}
        	\begin{subfigure}[t]{0.4\textwidth}
        		\includegraphics[width=\textwidth]{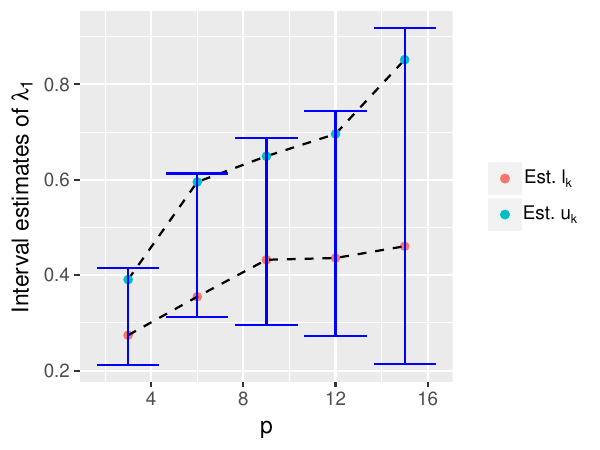}
        		\caption{Interval estimates for $\lambda_1$ for different~$p$s.} \label{fig:ProbitGrowingP}
        	\end{subfigure}
        	\caption{Spectral gap estimation for the AC chain}
        \end{figure}
	
	We now study the performance of our method when~$n$ or~$p$ increases for the original AC chain.
	First, consider a sequence of datasets where~$n$ grows.
	Let $X_{\scriptsize\mbox{lupus}} \in \mathbb{R}^{55 \times 3}$ be the design matrix for the lupus data, and let~$r$ be a positive integer.
	Set $X \in \mathbb{R}^{n \times 3}$ to be~$r$ copies of $X_{\scriptsize\mbox{lupus}}$ stacked on top of each other, so that $n=55r$.
	The response vector $(Y_1,Y_2,\dots,Y_n)^T$ is randomly generated in accordance with the probit regression model with the true value of~$\beta$ being $(-3,0,3)^T$.
	Let~$r$ range from~$1$ to $15$.
	This gives rise to a sequence of datasets with~$n$ growing from $55$ to $825$.
	An interval estimate for $\lambda_1$ is then constructed for each of these datasets.
	Throughout the simulation,~$k$ is fixed at~$5$, and~$N$ is fixed at $4 \times 10^5$.
	The result is given in Figure~\ref{fig:ProbitGrowingN}.
	Increasing~$n$, which is the dimension of $S_V$, apparently does not undermine the method.
	
	Now we consider a sequence of datasets where~$n$ is fixed and~$p$ grows.
	Let $n=200$, and let~$X$ be a $200 \times p$ matrix whose $ij$th element is $p_j(i)$, with $\{p_j(\cdot)\}_{j=1}^p$ being a set of orthogonal polynomials generated using the \verb|R| function \verb|poly()|.
	The response vector is randomly generated according to the probit model with the true value of~$\beta$ being $(-p, -p+2p/(1-p), -p + 4p/(1-p), \cdots, p)^T$.
	We apply our method to such a dataset when~$p$ is increased from~$3$ to $15$. 
	$N$ is set to be $4\times 10^5$, and~$k$ is either~$4$ or~$5$, whichever yields a better estimate.
	The interval estimates for $\lambda_1$ are given in Figure~\ref{fig:ProbitGrowingP}.
	As~$p$ increases, the length of the interval estimate grows quite rapidly, indicating that the method does not scale well with~$p$, that is, the dimension of~$S_U$.
	This is consistent with the analysis near the end of Section~\ref{MC}, which suggests that Algorithm~2 works well when~$n$ is large and~$p$ is small, but not the other way around.

	\subsection{Bayesian linear regression model with non-Gaussian errors}
	
	Let $Y_1,Y_2,\dots,Y_n$ be independent $d$-dimensional random
	vectors from the linear regression model
	\[
	Y_i = \beta^T x_i + \Sigma^{1/2}\varepsilon_i \,,
	\]
	where $x_i \in \mathbb{R}^{p}$ is known, while $\beta \in
	\mathbb{R}^{p \times d}$ and the $d\times d$ positive definite
	matrix $\Sigma$ are to be estimated.  The iid errors,
	$\varepsilon_1,\varepsilon_2,\dots,\varepsilon_n$, are assumed
	to have a pdf that is a scale mixture of Gaussian densities:
	\[
	f_h(\varepsilon) = \int_{\mathbb{R}_+} \frac{u^{d/2}}{(2\pi)^{d/2}} \exp \Big( -\frac{u}{2} \varepsilon^T\varepsilon \Big) h(u) \, du,
	\]
	where $h(\cdot)$ is a pdf with positive support, and
	$\mathbb{R}_+ := (0,\infty).$ For instance, if $d=1$ and $h(u)
	\propto u^{-2}e^{-1/(8u)},$ then $\varepsilon_1$ has pdf
	proportional to $e^{-|\varepsilon|/2}.$
	
	To perform a Bayesian analysis, we require a prior on the
	unknown parameter, $(\beta,\Sigma)$.  We adopt the (improper)
	Jeffreys prior, given by $1/|\Sigma|^{(d+1)/2}$.  Let $y$
	represent the $n \times d$ matrix whose $i$th row is the
	observed value of $Y_i$.  The following four conditions, which
	are sufficient for the resulting posterior to be proper
	\citep{qin2018trace, fernandez1999multivariate}, will be
	assumed to hold:
	\begin{enumerate}
		\item $n \geq p + d$,
		\item $(X:y)$ is full rank, where $X$ is the $n\times
		p$ matrix whose $i$th row is $x_i^T$,
		\item
		$
		\int_{\mathbb{R}_+} u^{d/2} h(u) \, du < \infty
		$, and
		\item
		$
		\int_{\mathbb{R}_+} u^{-(n-p-d)/2} h(u) \, du < \infty
		$.
	\end{enumerate}
	The posterior density is highly intractable, but there is a
	well-known DA algorithm to sample from it
	\citep{liu1996bayesian}. Under our framework, the DA chain
	$\Phi$ is characterized by the Mtd
	\[
	p\big((\beta,\Sigma),(\cdot,\cdot) \big) = \int_{\mathbb{R}_+^n} \pi_{U|V}(\cdot,\cdot| z) \pi_{V|U}(z|\beta,\Sigma) \, dz,
	\]
	where $z=(z_1,z_2,\dots,z_n)^T$,
	\[
	\begin{aligned}
	\pi_{U|V}(\beta,\Sigma|z) & \propto |\Sigma|^{-(n+d+1)/2} \prod_{i=1}^{n}\exp \Big\{ -\frac{z_i}{2} \big(y_i-\beta^Tx_i \big)^T \Sigma^{-1} \big(y_i-\beta^Tx_i \big) \Big\}, \, \mbox{and} \\
	\pi_{V|U}(z|\beta,\Sigma) & \propto \prod_{i=1}^{n}
	z_i^{d/2} \exp \Big\{ -\frac{z_i}{2} \big(y_i-\beta^Tx_i
	\big)^T \Sigma^{-1} \big(y_i-\beta^Tx_i \big) \Big\} h(z_i)
	\,.
	\end{aligned}
	\]
	The first conditional density, $\pi_{U|V}(\cdot,\cdot|z)$,
	characterizes a multivariate normal distribution on top of an
	inverse Wishart distribution, i.e. $\beta|\Sigma,z$ is
	multivariate normal, and $\Sigma|z$ is inverse Wishart.  The
	second conditional density, $\pi_{V|U}(\cdot|\beta,\Sigma)$,
	is a product of $n$ univariate densities.  Moreover, when
	$h(\cdot)$ is a standard pdf on $\mathbb{R}_+$, these
	univariate densities are often members of a standard
	parametric family.  The following proposition about the
	resulting DA operator is proved in \cite{qin2018trace}.
	\begin{proposition} \label{nongaussian} Suppose $h(\cdot)$ is
		strictly positive in a neighborhood of the origin. If there
		exists $\xi \in (1,2)$ and $\delta>0$ such that
		\[
		\int_{0}^{\delta} \frac{u^{d/2}h(u)}{\int_{0}^{\xi u} v^{d/2} h(v) \, dv} \, du < \infty, 
		\]
		then $P$ is trace-class.
	\end{proposition}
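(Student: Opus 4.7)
The plan is to invoke Theorem~\ref{basic} and verify that $\int p((\beta,\Sigma),(\beta,\Sigma)) \, d\beta \, d\Sigma < \infty$. Writing $q_i = q_i(\beta,\Sigma) := (y_i - \beta^T x_i)^T \Sigma^{-1}(y_i - \beta^T x_i)$, the conditional independence of the $z_i$ given $(\beta,\Sigma)$ yields
\[
\pi_{V|U}(z|\beta,\Sigma) = \prod_{i=1}^{n} \frac{z_i^{d/2} h(z_i) \exp(-z_i q_i/2)}{\int_0^{\infty} v^{d/2} h(v) \exp(-v q_i/2) \, dv}.
\]
The key pointwise bound restricts the integration in each denominator to $[0,\xi z_i]$, on which $\exp(-v q_i/2) \ge \exp(-\xi z_i q_i/2)$, giving
\[
\pi_{V|U}(z|\beta,\Sigma) \le \prod_{i=1}^{n} \frac{z_i^{d/2} h(z_i) \exp\{(\xi-1) z_i q_i/2\}}{\int_0^{\xi z_i} v^{d/2} h(v) \, dv}.
\]
Here $\xi \in (1,2)$ is used crucially: combined with the $\exp(-\sum_i z_i q_i/2)$ sitting inside $\pi_{U|V}(\beta,\Sigma|z)$, the total exponential becomes $\exp(-(2-\xi)\sum_i z_i q_i/2)$, which still decays in $(\beta,\Sigma)$ because $2 - \xi > 0$.

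Set $G(z) := \int |\Sigma|^{-(n+d+1)/2} \exp(-\sum_i z_i q_i/2) \, d\beta \, d\Sigma$, so that $\pi_{U|V}(\beta,\Sigma|z)$ equals its integrand divided by $G(z)$. Tonelli and the bound above give
\[
\int p((\beta,\Sigma),(\beta,\Sigma)) \, d\beta \, d\Sigma \le \int_{\mathbb{R}_+^n} \prod_{i=1}^{n} \frac{z_i^{d/2} h(z_i)}{\int_0^{\xi z_i} v^{d/2} h(v) \, dv} \cdot \frac{G((2-\xi)z)}{G(z)} \, dz.
\]
Next, compute $G(z)$ explicitly: the matrix-normal integral over $\beta$ (using condition (2) so that $X^T D_z X$ is positive definite, with $D_z = \mathrm{diag}(z)$) followed by the inverse-Wishart integral over $\Sigma$ (using condition (1), $n \ge p+d$) yields
\[
G(z) = C \, |X^T D_z X|^{-d/2} |S(z)|^{-(n-p)/2}, \qquad S(z) = y^T\big(D_z - D_z X(X^T D_z X)^{-1} X^T D_z\big) y.
\]
Since $|X^T D_{cz} X| = c^p |X^T D_z X|$ and $S(cz) = c\,S(z)$, one obtains the homogeneity $G(cz) = c^{-nd/2} G(z)$, so the ratio $G((2-\xi)z)/G(z) = (2-\xi)^{-nd/2}$ is a constant independent of $z$.

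Combining,
\[
\int p((\beta,\Sigma),(\beta,\Sigma)) \, d\beta \, d\Sigma \le (2-\xi)^{-nd/2} \prod_{i=1}^{n} \int_0^{\infty} \frac{z^{d/2} h(z)}{\int_0^{\xi z} v^{d/2} h(v) \, dv} \, dz.
\]
Each factor on the right is finite: on $(0,\delta)$ by the hypothesis, and on $(\delta,\infty)$ because the denominator is bounded below by $\int_0^{\xi\delta} v^{d/2} h(v) \, dv > 0$ (using positivity of $h$ near the origin) while the numerator integrates to a finite value by condition (3). The main obstacle I expect will be the scaling identity $G(cz) = c^{-nd/2} G(z)$: one must carry out the normal--inverse-Wishart integration explicitly and track that the powers of $c$ from $|X^T D_{cz} X|^{-d/2}$ and $|S(cz)|^{-(n-p)/2}$ combine to exactly $c^{-nd/2}$. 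Everything else is bookkeeping around the initial pointwise bound.
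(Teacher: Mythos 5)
Your proposal is correct, and its two key ingredients are exactly the ones this paper relies on: truncating each denominator $\int_0^\infty v^{d/2}h(v)e^{-vq_i/2}\,dv$ to $[0,\xi z_i]$ so that the surviving exponential is $\exp\{-(2-\xi)\sum_i z_iq_i/2\}$ with $2-\xi>0$, and then recognizing that the $(\beta,\Sigma)$-integral of that bound is a constant multiple of the normalizing constant $G(z)$, after which the $z$-integral factors and is handled by the hypothesis near $0$ and by conditions (3) and the positivity of $h$ near the origin away from $0$. Note that the paper does not reprove this proposition (it cites Qin and Hobert), but its appendix proof of the companion result, Proposition~\ref{nongaussianmoment}, follows precisely this template with $4-3\xi$ in place of $2-\xi$. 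The one place you diverge is the homogeneity $G(cz)=c^{-nd/2}G(z)$: you obtain it by carrying out the matrix-normal and inverse-Wishart integrations explicitly and tracking the powers of $c$ in $|X^TD_{cz}X|^{-d/2}$ and $|S(cz)|^{-(n-p)/2}$, which works (the powers do combine to $c^{-nd/2}$, and conditions (1)--(2) guarantee $X^TD_zX$ and $S(z)$ are positive definite for $z\in\mathbb{R}_+^n$), whereas the paper gets the same constant by the one-line change of variables $\Sigma\mapsto c\Sigma$ inside the $\Sigma$-integral, exploiting $cq_i(\beta,\Sigma)=q_i(\beta,\Sigma/c)$ and the Jacobian $c^{d(d+1)/2}$; the latter avoids the explicit closed form of $G(z)$ entirely, though one still needs (1)--(2) to know $G(z)<\infty$ so that $\pi_{U|V}$ is well defined. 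In short: same approach, with a heavier but valid verification of the scaling step.
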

	
	When $P$ is trace-class, we can pick an $\omega(\cdot)$ and
	try to make use of~\eqref{estimator}. A sufficient condition
	for the estimator's variance, $D^2$, to be finite is stated in the
	following proposition, whose proof is given in the appendix.
	\begin{proposition} \label{nongaussianmoment} Suppose that
		$h(\cdot)$ is strictly positive in a neighborhood of the
		origin.  If $\omega(z)$ can be written as $\prod_{i=1}^{n}
		\omega_i(z_i),$ and there exists $\xi \in (1,4/3)$ such that
		for all $i \in \{1,2,\dots,n\},$
		\begin{equation} \label{nongaussianomega}
		\int_{\mathbb{R}_+} \frac{u^{3d/2}h^3(u)} {(\int_{0}^{\xi u} v^{d/2}h(v) \, dv)^3 \omega_i^2(u) } \, du < \infty,
		\end{equation}
		then \eqref{momentcond} holds, and thus by Theorem~\ref{secondmoment}, the estimator \eqref{estimator} has finite variance.
	\end{proposition}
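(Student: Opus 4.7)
The plan is to verify condition~\eqref{momentcond} by constructing a pointwise envelope for $\pi_{V|U}^3(z\mid\beta,\Sigma)\,\pi_{U|V}(\beta,\Sigma\mid z)$ that, after integration over $(\beta,\Sigma)$, factors across the $n$ coordinates of~$z$ into the one-dimensional integrals appearing in~\eqref{nongaussianomega}. Throughout I write $q_i(\beta,\Sigma)=(y_i-\beta^Tx_i)^T\Sigma^{-1}(y_i-\beta^Tx_i)$ and let $m_i(q_i)=\int_0^\infty v^{d/2}e^{-vq_i/2}h(v)\,dv$ denote the $i$th normalizing constant of $\pi_{V|U}(z\mid\beta,\Sigma)=\prod_i z_i^{d/2}e^{-z_iq_i/2}h(z_i)/m_i(q_i)$, and let $N(z)$ denote the normalizing constant of $\pi_{U|V}(\beta,\Sigma\mid z)\propto N(z)^{-1}|\Sigma|^{-(n+d+1)/2}\prod_i e^{-z_iq_i/2}$.

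First I would bound $\pi_{V|U}$ pointwise. Restricting the defining integral for $m_i(q_i)$ to $(0,\xi z_i)$ and using $e^{-vq_i/2}>e^{-\xi z_iq_i/2}$ on that interval gives
\[
m_i(q_i)\ \ge\ e^{-\xi z_iq_i/2}\int_0^{\xi z_i}v^{d/2}h(v)\,dv,
\]
which is strictly positive because $h$ is positive near $0$. Hence
\[
\pi_{V|U}(z\mid\beta,\Sigma)\ \le\ \prod_{i=1}^n\frac{z_i^{d/2}h(z_i)\,e^{(\xi-1)z_iq_i/2}}{\int_0^{\xi z_i}v^{d/2}h(v)\,dv}.
\]
Cubing this envelope and multiplying by $\pi_{U|V}$, the combined exponent in $q_i$ is $-(4-3\xi)z_iq_i/2$. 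Setting $c:=4-3\xi\in(0,1)$ (this is where $\xi<4/3$ enters), the remaining factor $|\Sigma|^{-(n+d+1)/2}\prod_i e^{-cz_iq_i/2}$ coincides with $N(cz)\,\pi_{U|V}(\beta,\Sigma\mid cz)$, so integrating out $(\beta,\Sigma)$ kills the density and leaves
\[
\int\pi_{V|U}^3(z\mid\beta,\Sigma)\,\pi_{U|V}(\beta,\Sigma\mid z)\,d\beta\,d\Sigma\ \le\ \frac{N(cz)}{N(z)}\prod_{i=1}^n\bigg[\frac{z_i^{d/2}h(z_i)}{\int_0^{\xi z_i}v^{d/2}h(v)\,dv}\bigg]^3.
\]

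The main obstacle is showing the ratio $N(cz)/N(z)$ is controlled uniformly in~$z$. I would handle it with the change of variables $\Sigma=c\Sigma'$, under which $q_i(\beta,c\Sigma')=c^{-1}q_i(\beta,\Sigma')$ (so $cz_iq_i(\beta,\Sigma)/2$ becomes $z_iq_i(\beta,\Sigma')/2$), the determinant contributes $|c\Sigma'|^{-(n+d+1)/2}=c^{-d(n+d+1)/2}|\Sigma'|^{-(n+d+1)/2}$, and the symmetric-matrix Jacobian contributes $c^{d(d+1)/2}$. Adding exponents of $c$ yields $N(cz)=c^{-nd/2}N(z)$, so $N(cz)/N(z)\equiv c^{-nd/2}$ is independent of $z$. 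Dividing the previous display by $\omega^2(z)=\prod_i\omega_i^2(z_i)$ and applying Fubini, the double integral in~\eqref{momentcond} reduces to $c^{-nd/2}$ times the product over $i$ of the one-dimensional integrals in~\eqref{nongaussianomega}, each finite by hypothesis. Theorem~\ref{secondmoment} then delivers $D^2<\infty$.
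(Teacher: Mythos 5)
Your proposal is correct and follows essentially the same route as the paper's proof: the same lower bound $m_i(q_i)\ge e^{-\xi z_i q_i/2}\int_0^{\xi z_i}v^{d/2}h(v)\,dv$, the same combination of exponents producing the factor $e^{-(4-3\xi)z_iq_i/2}$, and the same $\Sigma$-scaling argument (the paper absorbs $4-3\xi$ into $\Sigma$ and pulls out $(4-3\xi)^{-nd/2}$, which is exactly your identity $N(cz)=c^{-nd/2}N(z)$), after which the integral factors over the coordinates of $z$ into the one-dimensional integrals in \eqref{nongaussianomega}.
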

	
	For illustration, take $d=1$ and $h(u) \propto
	u^{-2}e^{-1/(8u)}$.  Then $\varepsilon_1$ follows a scaled
	Laplace distribution, and the model can be viewed as a median
	regression model with variance $\Sigma$ unknown.  It's easy to
	show that $h(\cdot)$ satisfies the assumptions in
	Proposition~\ref{nongaussian}, so the resultant DA operator is
	trace-class.  Now let
	\[
	\omega(z) = \prod_{i=1}^{n} \omega_i(z_i) \propto
	\prod_{i=1}^{n} z_i^{-3/2} e^{-1/(32z_i)} \,.
	\]
	The following result shows that this will lead to an estimator
	with finite variance.
	\begin{corollary}
		Suppose $d=1$, $h(u) \propto u^{-2}e^{-1/(8u)}$, and 
		\[
		\omega(z) = \prod_{i=1}^{n} \omega_i(z_i) \propto
		\prod_{i=1}^{n} z_i^{-\alpha-1} e^{-\gamma/z_i} \,,
		\]
		where $0 < \alpha < 3/4$ and $0 < \gamma < 3/64$.  Then
		the variance, $D^2$, is finite.
	\end{corollary}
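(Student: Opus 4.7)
The plan is to invoke Proposition~\ref{nongaussianmoment}, so it suffices to exhibit some $\xi \in (1, 4/3)$ for which condition~\eqref{nongaussianomega} holds. Strict positivity of $h(\cdot)$ near the origin is immediate from $h(u) \propto u^{-2} e^{-1/(8u)}$.

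First I would plug in the explicit densities. With $d=1$, the numerator in~\eqref{nongaussianomega} becomes $u^{3d/2} h^3(u) \propto u^{-9/2} e^{-3/(8u)}$, and the weight becomes $\omega_i^2(u) \propto u^{-2\alpha-2} e^{-2\gamma/u}$. For the denominator I would estimate $G(u) := \int_0^{\xi u} v^{1/2} h(v)\, dv$ via the change of variable $t = 1/(8v)$: up to a positive constant, $G(u)$ equals the upper incomplete gamma function $\int_{1/(8\xi u)}^{\infty} t^{-1/2} e^{-t}\, dt$. As $u \to \infty$ this tends to $\Gamma(1/2) > 0$, so $G(u)$ is bounded and bounded away from zero there; as $u \to 0^+$, the standard asymptotic $\int_T^\infty t^{-1/2} e^{-t}\, dt \sim T^{-1/2} e^{-T}$ yields $G(u)^3 \asymp u^{3/2} e^{-3/(8\xi u)}$.

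Combining the pieces, the integrand in~\eqref{nongaussianomega} behaves like $u^{2\alpha - 5/2}$ as $u \to \infty$ and like
\[
u^{2\alpha - 4} \exp\left( \frac{1}{u}\left(-\frac{3}{8} + \frac{3}{8\xi} + 2\gamma\right) \right)
\]
as $u \to 0^+$. Convergence at infinity reduces to $2\alpha - 5/2 < -1$, i.e.\ $\alpha < 3/4$, which is assumed. Convergence at the origin reduces to requiring the coefficient of $1/u$ to be strictly negative, i.e.\ $\gamma < \frac{3}{16}\left(1 - \frac{1}{\xi}\right)$. Since this right-hand side is continuous and increasing in $\xi$ with limit $3/64$ as $\xi \uparrow 4/3$, the hypothesis $\gamma < 3/64$ lets me select $\xi \in (1, 4/3)$ close enough to $4/3$ to satisfy the strict inequality. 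Then~\eqref{nongaussianomega} holds and Proposition~\ref{nongaussianmoment} delivers $D^2 < \infty$.

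The only delicate step is the asymptotic for $G(u)$ as $u \to 0^+$: the heart of the argument is that the exponential tail $e^{-1/(8\xi u)}$ coming from $G(u)^3$ in the denominator cancels against $e^{-3/(8u)}$ in the numerator, leaving a residual $\exp\bigl(-\frac{3(\xi-1)}{8\xi u}\bigr)$ which must absorb the factor $e^{2\gamma/u}$ from $1/\omega_i^2$. The threshold $\gamma < 3/64$ is precisely the supremum of $\frac{3}{16}(1 - 1/\xi)$ over $\xi \in (1, 4/3)$, so the hypothesis is essentially sharp and there is no slack to lose.
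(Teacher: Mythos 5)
Your proposal is correct and follows essentially the same route as the paper: verify condition~\eqref{nongaussianomega} by splitting the integral at the origin and at infinity, note the denominator $\int_0^{\xi u} v^{1/2}h(v)\,dv$ is bounded away from zero for large $u$, and choose $\xi\in\bigl((1-16\gamma/3)^{-1},4/3\bigr)$ so that the residual exponential rate near the origin is strictly negative, which is exactly where the threshold $\gamma<3/64$ enters. The only cosmetic difference is that you quantify the small-$u$ behavior via the incomplete-gamma asymptotic, whereas the paper uses L'H\^opital's rule on the cube root of the integrand; both yield the same conclusion.
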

	\begin{proof}
		In light of Proposition~\ref{nongaussianmoment}, we only
		need to show that \eqref{nongaussianomega} holds for some
		$\xi \in (1,4/3).$ For any $\xi > 0,$ making use of the fact that (by monotone
		convergence theorem)
		\[
		\lim\limits_{u \to \infty} \int_{0}^{\xi u}
		v^{1/2}h(v) \, dv = \int_{\mathbb{R}_+} u^{1/2} h(u)
		\, du > 0 \,,
		\]
		one can easily show for any $\delta > 0$,
		\begin{equation} \label{inftyend}
		\int_{\delta}^{\infty} \frac{u^{3/2}h^3(u)} {(\int_{0}^{\xi u} v^{1/2}h(v) \, dv)^3 \omega_i^2(u) } \, du = \int_{\delta}^{\infty} \frac{u^{2\alpha-5/2}\exp\{ 2\gamma/u-3/(8u) \} } {(\int_{0}^{\xi u} v^{1/2}h(v) \, dv)^3} \, du < \infty.
		\end{equation}
		On the other hand, using L'H\^{o}pital's rule, we can see for $(1-16\gamma/3)^{-1} < \xi < 4/3,$
		\[
		\begin{aligned}
		\lim\limits_{u \to 0} \Bigg( \frac{u^{3/2}h^3(u)} {(\int_{0}^{\xi u} v^{1/2}h(v) \, dv)^3 \omega_i^2(u) } \Bigg)^{1/3} &= \lim\limits_{u \to 0} \frac{u^{2\alpha/3-5/6} \exp\{ 2\gamma/(3u)-1/(8u) \}}{ \int_{0}^{\xi u} v^{-3/2}e^{-1/(8v)}\, dv } \\
		&= \lim\limits_{u \to 0} R(u) \exp \Big\{ -\Big(-\frac{2\gamma}{3} - \frac{1}{8\xi} + \frac{1}{8} \Big)\frac{1}{u} \Big\} \\
		& = 0,
		\end{aligned}
		\]
		where $R(u)$ is a function that is either bounded near
		the origin or goes to $\infty$ at the rate of some
		power function as $u \to 0.$ It follows that for $\xi \in ((1-16\gamma/3)^{-1}, 4/3)$ and
		small enough $\delta,$
		\begin{equation} \label{origend}
		\int_{0}^{\delta} \frac{u^{3/2}h^3(u)} {(\int_{0}^{\xi u} v^{1/2}h(v) \, dv)^3 \omega_i^2(u) } \, du < \infty.
		\end{equation}
		Combining \eqref{inftyend} and \eqref{origend} yields
		\eqref{nongaussianomega}. The result then follows.
	\end{proof}

	\begin{figure}
		\centering
		\begin{subfigure}[t]{0.4\textwidth}
			\includegraphics[width=\textwidth]{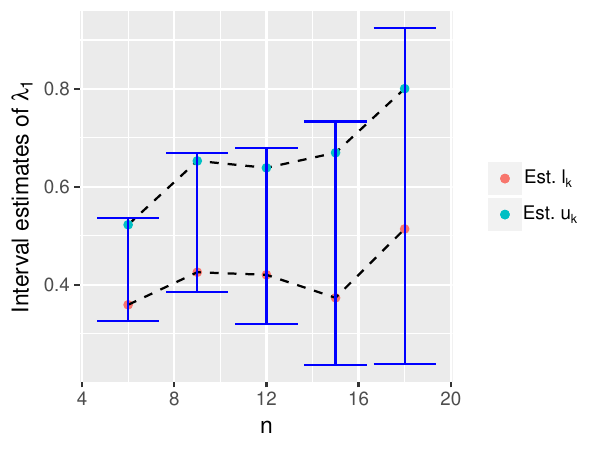}
			\caption{Interval estimates for $\lambda_1$ for different~$n$s.} \label{fig:GrowingN}
		\end{subfigure}
		\hspace{0.5cm}
		\begin{subfigure}[t]{0.4\textwidth}
			\includegraphics[width=\textwidth]{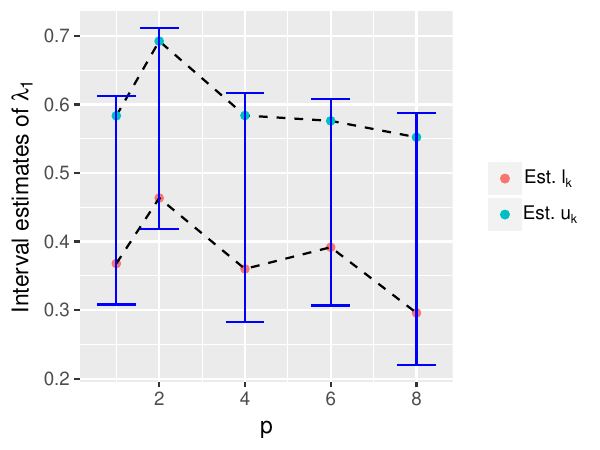}
			\caption{Interval estimates for $\lambda_1$ for different~$p$s.} \label{fig:GrowingP}
		\end{subfigure}
		\caption{Spectral gap estimation for the DA chain for Bayesian linear model}
	\end{figure}
	
	We now test the effectiveness of the Monte Carlo estimator~\eqref{estimator} on a sequence of growing datasets with $d=1$.
	Let $p=3$, and let~$X$ be an $n \times p$ design matrix with~3 distinct rows, $(1,0,0)^T$, $(0,1,0)^T$, and $(0,0,1)^T$, each replicated~$r$ times, so that $n = 3r$.
	The responses, $Y_1,Y_2,\cdots,Y_n$, are then generated according to the previously defined linear regression model with the true value of $\beta$ being $(-3,0,3)^T$, and the true value of~$\Sigma$ being~$1$.
	In other words,
	$
	Y_i - x_i^T (-3,0,3)^T \sim f_h(\cdot)
	$
	independently for each~$i$, where $f_h(u) \propto e^{-|u|/2}$.
	The resultant DA chain~$\Phi$ lives in $S_U = \mathbb{R}^3 \times \mathbb{R}_+$, and $S_V = \mathbb{R}^n = \mathbb{R}^{3r}$.
	Let~$r$ grow from~$2$ to~$6$.
	We use a Monte Carlo sample size of $N = 2 \times 10^6$ to form interval estimates of $\lambda_1$ for different values of~$r$.
	For simplicity, we fix~$k$ to be~$4$.
	The results are given in Figure~\ref{fig:GrowingN}.
	As~$n$ grows, the length of the interval estimate increases quite rapidly.
	This is understandable, since our method is essentially an importance sampling technique, which does not work well in high dimensional settings unless tuned with great care.
	In the previous subsection where we study Bayesian probit regression, we are able to easily deal with a dataset with $n>800$.
	Part of the reason is that, in that case, Algorithm~2 is used, and since $S_U$ is low dimensional, it's easy to choose $\psi(\cdot)$ that resembles $\pi_U(\cdot)$.

	Consider another sequence of datasets where $d=1$, $n=10$, and~$p$ is increased from~$1$ to~$8$.
	The $ij$th element of the design matrix~$X$ is set to be $p_j(i)$, where $\{p_j(\cdot)\}_{j=1}^p$ are orthogonal polynomials generated in \verb|R|.
	The responses are generated according to the aforementioned linear regression model with the true value of $\beta$ being $(-p, p+2p/(1-p), -p + 4p/(1-p), \dots, p)^T$, and the true value of~$\Sigma$ being~$1$.
	In this case, $S_U = \mathbb{R}^p \times \mathbb{R}_+$, and $S_V = \mathbb{R}^{10}$. 
	Using a Monte Carlo sample size of $N=2 \times 10^6$ and setting $k=4$, we obtain interval estimates of $\lambda_1$ for different~$p$s.
	The results are given in Figure~\ref{fig:GrowingP}.
	Compare this to the case where~$p$ is fixed an~$n$ grows.
	We see that the effectiveness of Algorithm~1, characterized by the length of the interval estimate it produces, is much less susceptible to the growing dimension of $S_U$ than to that of $S_V$.

	\vspace*{5mm}
	
	\noindent {\bf \large Acknowledgment}.  The second and third authors were supported by NSF Grant DMS-15-11945.
	
	
	\vspace*{12mm}
	\noindent {\LARGE \bf Appendix}
	\appendix
	
	\section{Proof of Theorem~\ref{basic}}

	\vspace*{3mm}
	\noindent
	{\bf Theorem \ref{basic}.\,}
	\textit{
		The DA operator~$P$ is trace-class if and only if
		\begin{equation} \tag{\ref{traceclass}}
		\int_{S_U} p(u,u) \, \mu(du) < \infty.
		\end{equation}
		If~\eqref{traceclass} holds, then for any positive integer $k,$
		\begin{equation} \tag{\ref{sum=int}}
		s_k := \sum_{i=0}^{\kappa} \lambda_i^k = \int_{S_U} p^{(k)}(u,u) \, \mu(du) < \infty.
		\end{equation}
		}
		
	\begin{proof}
          Note that $P$ is self-adjoint and non-negative.  Let
          $\{g_i\}_{i=0}^{\infty}$ be an orthonormal basis of
          $L^2(\pi_U)$.  The operator $P$ is defined to be trace-class
          if \citep[see e.g.][]{conway2000course}
		\begin{equation} \label{traceclassdef}
		\sum_{i=0}^{\infty} \langle P g_i, g_i \rangle_{\pi_U} < \infty.
		\end{equation}
		This condition is equivalent to $P$ being compact with
                summable eigenvalues.  To show that $P$ being
                trace-class is equivalent to~\eqref{traceclass}, we
                will prove a stronger result, namely
		\begin{equation} \label{trace} \sum_{i=0}^{\infty}
                  \langle P g_i, g_i \rangle_{\pi_U}= \int_{S_U}
                  p(u,u) \mu(du).
		\end{equation}
		
		We begin by defining two new Hilbert spaces.  Let
                $L^2(\pi_V)$ be the Hilbert space consisting of
                functions that are square integrable with respect to
                the weight function $\pi_V(\cdot).$ For $f, g \in
                L^2(\pi_V),$ their inner product is defined, as usual,
                by
		\[
		\langle f, g \rangle_{\pi_V} = \int_{S_V} f(v)
                \overline{g(v)} \pi_V(v) \, \nu(dv).
		\]
		Let $L^2(\pi_U\times\pi_V)$ be the Hilbert space of
                functions on $S_U\times S_V$ that are square
                integrable with respect to the weight function
                $\pi_U(\cdot)\pi_V(\cdot).$ For $f, g \in
                L^2(\pi_U\times \pi_V),$ their inner product is
		\[
		\langle f, g\rangle_{\pi_U\times\pi_V} =
                \int_{S_U\times S_V} f(u,v) \overline{g(u,v)}
                \pi_U(u)\pi_V(v) \,\mu(du)\,\nu(dv).
		\]
		Note that $L^2(\pi_V)$ is separable. Let
                $\{h_j\}_{j=0}^{\infty}$ be an orthonormal basis of
                $L^2(\pi_V).$ It can be shown that $\{g_ih_j\}_{(i,j)
                  \in \mathbb{Z}_+^2}$ is an orthonormal basis of
                $L^2(\pi_U\times\pi_V)$.  Of course, $g_ih_j$ denotes
                the function given by $(g_ih_j)(u,v) = g_i(u)h_j(v).$
		
		The inequality \eqref{traceclass} is equivalent to
		\[
		\int_{S_U \times S_V}
                \bigg(\frac{\pi_{U,V}(u,v)}{\pi_U(u)\pi_V(v)}\bigg)^2
                \pi_U(u) \pi_V(v) \, \mu(du) \, \nu(dv) < \infty,
		\]
		which holds if and only if the function $\varphi:
                S_U\times S_V \to \mathbb{R}$ given by
		\[
		\varphi(u, v) = \frac{\pi_{U,V}(u,v)}{\pi_U(u)\pi_V(v)}
		\]
		is in $L^2(\pi_U \times \pi_V).$
                Suppose~\eqref{traceclass} holds.  Then by Parseval's
                identity,
		\begin{equation*} \nonumber
		\begin{aligned}
                  \int_{S_U} p(u, u) \, \mu(du) &= \langle \varphi, \varphi \rangle_{\pi_U \times \pi_V} \\
                  &= \sum_{(i,j) \in \mathbb{Z}_+^2} |\langle \varphi, g_ih_j \rangle_{\pi_U \times \pi_V}|^2 \\
                  &= \sum_{(i,j)\in \mathbb{Z}_+^2} \Big| \int_{S_U\times S_V} \overline{g_i(u)} \overline{h_j(v)} \pi_{U,V}(u,v) \, \mu(du)\,\nu(dv) \Big|^2 \\
                  &= \sum_{i=0}^{\infty} \sum_{j=0}^{\infty} \Big|
                  \int_{S_V} \Big(\int_{S_U} \overline{g_i(u)}
                  \pi_{U|V}(u|v) \, \mu(du) \Big) \overline{h_j(v)}
                  \pi_V(v) \,\nu(dv) \Big|^2.
		\end{aligned}
		\end{equation*}
		Again by Parseval's identity, this time applied to the function on $S_V$ (and in fact, in $L^2(\pi_V)$ by Jensen's inequality) given by
		\[
		\varphi_i(v) = \int_{S_U} \overline{g_i(u)} \pi_{U|V}(u|v) \, \mu(du),
		\]
		we have
		\begin{equation} \label{trace2}
		\begin{aligned}
                  \int_{S_U} p(u, u) \, \mu(du) &= \sum_{i=0}^{\infty} \sum_{j=0}^{\infty} |\langle \varphi_i, h_j \rangle_{\pi_V}|^2 \\
                  &= \sum_{i=0}^{\infty} \langle \varphi_i, \varphi_i \rangle_{\pi_V} \\
                  &= \sum_{i=0}^{\infty} \int_{S_V} \Big|\int_{S_U} \overline{g_i(u)} \pi_{U|V}(u|v) \, \mu(du) \Big|^2 \pi_V(v) \, \nu(dv) \\
                  &= \sum_{i=0}^{\infty} \int_{S_V} \int_{S_U} \Big(\int_{S_U} g_i(u') \pi_{U|V}(u'|v) \pi_{V|U}(v|u) \, \mu(du') \Big) \overline{g_i(u)} \pi_U(u) \, \mu(du) \, \nu(dv) \\
                  &= \sum_{i=0}^{\infty} \int_{S_U} \Big(\int_{S_U}
                  p(u,u') g_i(u') \, \mu(du') \Big) \overline{g_i(u)}
                  \pi_U(u) \, \mu(du).
		\end{aligned}
		\end{equation}
		Note that the use of Fubini's theorem in the last
                equality can be easily justified by noting that $g_i
                \in L^2(\pi_U)$, and making use of Jensen's
                inequality.  But the right hand side of~\eqref{trace2}
                is precisely $\sum_{i=0}^{\infty} \langle P g_i, g_i
                \rangle_{\pi_U}.$ Hence, \eqref{trace} holds when
                $\int_{S_U} p(u,u) \, \mu(du)$ is finite.
		
		To finish our proof of~\eqref{trace}, we'll
                show~\eqref{traceclassdef}
                implies~\eqref{traceclass}. Assume
                that~\eqref{traceclassdef} holds. Tracing backwards
                along~\eqref{trace2} yields
		\[
		\sum_{(i,j) \in \mathbb{Z}_+^2} |\langle \varphi_i, h_j \rangle_{\pi_V}|^2 < \infty.
		\]
		This implies that the function
		\[
		\tilde{\varphi} := \sum_{(i,j) \in \mathbb{Z}_+^2} \langle \varphi_i, h_j \rangle_{\pi_V} g_i h_j
		\]
		is in $L^2(\pi_{U} \times \pi_V).$ Recall
                that~\eqref{traceclass} is equivalent to $\varphi$
                being in $L^2(\pi_{U} \times \pi_V).$ Hence, it
                suffices to show that $\tilde{\varphi}(u,v) =
                \varphi(u,v)$ almost everywhere. Define a linear
                transformation $T: L^2(\pi_U) \to L^2(\pi_V)$ by
		\[
		Tf(v) = \int_{S_U} f(u) \pi_{U|V}(u|v) \, \mu(du), \quad \forall f \in L^2(\pi_U).
		\]
		By Jensen's inequality, $T$ is bounded, and thus, continuous. For any $g = \sum_{i=0}^{\infty} \alpha_i g_i \in L^2(\pi_{U})$ and $h = \sum_{j=0}^{\infty} \beta_j h_j \in L^2(\pi_{V}),$
		\[
		\begin{aligned}
		&\int_{S_V} \int_{S_U} \varphi(u,v) \overline{g(u)} \overline{h(v)} \, \pi_U(u)\pi_V(v) \, \mu(du) \, \nu(dv) \\
		&= \langle T \overline{g}, h \rangle_{\pi_V} \\
		&= \sum_{i=0}^{\infty} \sum_{j=0}^{\infty} \overline{\alpha_i \beta_j} \langle T \overline{g_i}, h_j \rangle_{\pi_V} \\
		&= \sum_{i=0}^{\infty} \sum_{j=0}^{\infty} \overline{\alpha_i \beta_j} \langle \varphi_i, h_j \rangle_{\pi_V} \\
		&= \langle \tilde{\varphi}, g h \rangle_{\pi_U\times\pi_V} \\
		&= \int_{S_V} \int_{S_U} \tilde{\varphi}(u,v) \overline{g(u)} \overline{h(v)} \, \pi_U(u)\pi_V(v) \, \mu(du) \, \nu(dv),
		\end{aligned}
		\]
		where $\overline{g} \in L^2(\pi_V)$ is given by
                $\overline{g}(u) := \overline{g(u)},$ and
                $\overline{g_i}$ is defined
                similarly for $i \in \mathbb{Z}_+$. This implies that for any $C_1 \in
                \mathcal{U}$ and $C_2 \in \mathcal{V},$
		\[
		\int_{C_1 \times C_2} \varphi(u,v)\, \pi_U(u)\pi_V(v) \, \mu(du) \, \nu(dv) = \int_{C_1 \times C_2} \tilde{\varphi}(u,v)\, \pi_U(u)\pi_V(v) \, \mu(du) \, \nu(dv).
		\]
		Note that
		\begin{equation} \label{tildephi<}
		\int_{S_U \times S_V} |\tilde{\varphi}(u,v)| \, \pi_U(u)\pi_V(v) \, \mu(du) \, \nu(dv) \leq \langle \tilde{\varphi}, \tilde{\varphi} \rangle_{\pi_U \times \pi_V}^{1/2}< \infty.
		\end{equation}
		By~\eqref{tildephi<} and the dominated convergence
                theorem, one can show that
		\[
		\mathcal{A} := \Big\{ C\in\mathcal{U}\times\mathcal{V} \, \Big| \, \int_{C} \varphi(u,v)\, \pi_U(u)\pi_V(v) \, \mu(du) \, \nu(dv) = \int_{C} \tilde{\varphi}(u,v)\, \pi_U(u)\pi_V(v) \, \mu(du) \, \nu(dv) \Big\}
		\]
		is a $\lambda$ system. An application of Dynkin's
                $\pi$-$\lambda$ theorem reveals that
                $\mathcal{\mathcal{U}\times\mathcal{V}} \subset
                \mathcal{A}.$ Therefore, $\tilde{\varphi}(u,v) =
                \varphi(u,v)$ almost everywhere,
                and~\eqref{traceclass} follows.
		
		For the rest of the proof, assume that~$P$ is
                trace-class. This implies that~$P$ is compact, and
                thus admits the spectral decomposition \citep[see
                e.g.][\S 28 Corollary 2.1]{helmberg2014introduction}
                given by
		\begin{equation} \label{decompP}
		P f = \sum_{i=0}^{\kappa} \lambda_i \langle f, f_i \rangle_{\pi_U} f_i, \quad f \in L^2(\pi_U)
		\end{equation}
		where $f_i, \, i=0,1,\dots,\kappa,$ is the normalized eigenfunction corresponding to $\lambda_i.$ By Parseval's identity,
		\[
		\begin{aligned}
                  \sum_{i=0}^{\infty} \langle Pg_i, g_i \rangle_{\pi_U} &= \sum_{i=0}^{\infty} \sum_{j=0}^{\kappa} \lambda_j |\langle g_i, f_j \rangle_{\pi_U}|^2 \\
                  &= \sum_{j=0}^{\kappa} \lambda_j \langle f_j, f_j \rangle_{\pi_U} \\
                  &= \sum_{j=0}^{\kappa} \lambda_j.
		\end{aligned}
		\]
		This equality is in fact a trivial case of Lidskii's
                theorem \citep[see e.g.][]{erdos1974trace,
                  gohberg2012traces}. It follows that~\eqref{sum=int}
                holds for $k=1.$
		
		We now consider the case where $k\geq 2.$ By
                \eqref{decompP} and a simple induction, we have the
                following decomposition for $P^k.$
		\[
		P^k f = \sum_{i=0}^{\kappa} \lambda_i^k \langle f, f_i
                \rangle_{\pi_U} f_i, \quad f \in L^2(\pi_U) \,.
		\]
		Hence $P^k$ is trace-class with ordered positive
                eigenvalues $\{\lambda_i^k\}_{i=0}^{\kappa}.$ Note
                that $P^k$ is a Markov operator whose Mtd is
                $p^{(k)}(u,\cdot), \, u \in S_U.$ Thus, in order to
                show that \eqref{sum=int} holds for $k \geq 2,$ it
                suffices to verify $P^k$ is a DA operator, for then we
                can treat $P^k$ as $P$ and repeat our argument for the
                $k=1$ case. To be specific, we'll show that there
                exists a random variable $\tilde{V}$ taking values on
                $S_{\tilde{V}},$ where $(S_{\tilde{V}},
                \mathcal{\tilde{V}}, \tilde{\nu})$ is a
                $\sigma$-finite measure space and
                $\mathcal{\tilde{V}}$ is countably generated, such
                that for $u \in S_U,$
		\begin{equation} \label{DA} p^{(k)}(u,\cdot) =
                  \int_{S_{\tilde{V}}} \pi_{U|\tilde{V}}(\cdot|v)
                  \pi_{\tilde{V}|U} (v|u) \, \tilde{\nu}(dv),
		\end{equation}
		where $\pi_{\tilde{V}}(\cdot),$ $\pi_{U|\tilde{V}}(\cdot|\cdot),$ and $\pi_{\tilde{V}|U} (\cdot|\cdot)$ have the apparent meanings.
		
		Let $(U_k,V_k)_{k=0}^{\infty}$ be a
		Markov chain. Suppose that $U_0$ has pdf
		$\pi_U(\cdot)$, and for any non-negative integer $k,$
		let $V_k|U_k=u$ have pdf
		$\pi_{V|U}(\cdot|u),$ and let
		$U_{k+1}|V_k=v$ have pdf
		$\pi_{U|V}(\cdot|v).$ It's easy to see
		$\{U_k\}_{k=0}^{\infty}$ is a stationary DA
		chain with Mtd $p(u,\cdot).$ Suppose $k$ is even. The
		pdf of $U_k|U_0 = u$ is
		\[
		p^{(k)}(u, \cdot) = \int_{S_U} p^{(k/2)}(u,u') p^{(k/2)}(u',\cdot) \mu(du).
		\]
		Meanwhile, since the chain is reversible and starts
		from the stationary distribution,
		$U_0|U_{k/2} = u$ has the same
		distribution as $U_{k/2}|U_0=u,$ which
		is just $p^{(k/2)}(u,\cdot).$ Thus,~\eqref{DA} holds
		with $\tilde{V} = U_{k/2}.$ A similar argument
		shows that when~$k$ is odd,~\eqref{DA} holds with
		$\tilde{V} = V_{(k-1)/2}.$
	\end{proof}
	
	\section{Proof of Proposition~\ref{nongaussianmoment}}

	\vspace*{3mm}
	\noindent
	{\bf Proposition \ref{nongaussianmoment}.\;} \textit{ Suppose that
		$h(\cdot)$ is strictly positive in a neighborhood of the
		origin.  If $\omega(z)$ can be written as $\prod_{i=1}^{n}
		\omega_i(z_i),$ and there exists $\xi \in (1,4/3)$ such that
		for all $i \in \{1,2,\dots,n\},$
		\begin{equation*}
                  \int_{\mathbb{R}_+} \frac{u^{3d/2}h^3(u)} {(\int_{0}^{\xi u} v^{d/2}h(v) \, dv)^3 \omega_i^2(u) } \, du < \infty,
		\end{equation*}
		then \eqref{momentcond} holds, and thus by
                Theorem~\ref{secondmoment}, second moment exists for
                the estimator \eqref{estimator}.  }
	
	\begin{proof}
		Let $S_d$ be the set of $d \times d$ positive definite matrices. For any $\beta \in \mathbb{R}^p,$ $\Sigma \in S_d,$ $z\in \mathbb{R}^n,$ and $\xi \in (1,4/3),$
		\[
		\begin{aligned}
		&\pi_{U|V}(\beta,\Sigma|z) \pi^3_{V|U}(z|\beta,\Sigma) \\
		&= \frac{ |\Sigma|^{-(n+d+1)/2} \prod_{i=1}^{n}\exp \{ -z_i (y_i-\beta^Tx_i )^T \Sigma^{-1} (y_i-\beta^Tx_i ) / 2 \} } { \int_{\mathbb{R}^p} \int_{S_d} |\tilde{\Sigma}|^{-(n+d+1)/2} \prod_{i=1}^{n}\exp \{ -z_i (y_i-\tilde{\beta}^Tx_i )^T \tilde{\Sigma}^{-1} (y_i-\tilde{\beta}^Tx_i ) / 2 \} \, d\tilde{\Sigma}\,d\tilde{\beta}  } \times \\
		&\quad\quad \prod_{i=1}^{n} \frac{ z_i^{3d/2} \exp \{ -3z_i (y_i-\beta^Tx_i )^T \Sigma^{-1} (y_i-\beta^Tx_i )/2 \} h^3(z_i) } { \{ \int_0^{\infty} v^{d/2} \exp [ -v (y_i-\beta^Tx_i )^T \Sigma^{-1} (y_i-\beta^Tx_i ) / 2 ] h(v) \, dv \}^3 } \\
		& \leq \frac{ |\Sigma|^{-(n+d+1)/2} \prod_{i=1}^{n}\exp \{ -z_i (y_i-\beta^Tx_i )^T [\Sigma/(4-3\xi)]^{-1} (y_i-\beta^Tx_i ) /2 \} } { \int_{\mathbb{R}^p} \int_{S_d} |\tilde{\Sigma}|^{-(n+d+1)/2} \prod_{i=1}^{n}\exp \{ -z_i (y_i-\tilde{\beta}^Tx_i )^T \tilde{\Sigma}^{-1} (y_i-\tilde{\beta}^Tx_i ) / 2 \} \, d\tilde{\Sigma}\,d\tilde{\beta}  } \times \\
		&\quad\quad \prod_{i=1}^{n} \frac{ z_i^{3d/2} h^3(z_i) } { ( \int_0^{\xi z_i} v^{d/2} h(v) \, dv )^3 }.
		\end{aligned}
		\]
		Note that
		\[
		\begin{aligned}
		&\int_{S_d} |\Sigma|^{-(n+d+1)/2} \prod_{i=1}^{n}\exp \bigg\{ -\frac{z_i}{2} \big(y_i-\beta^Tx_i \big)^T \bigg(\frac{\Sigma}{4-3\xi}\bigg)^{-1} \big(y_i-\beta^Tx_i \big) \bigg\} \, d\Sigma \\
		&= (4-3\xi)^{-nd/2} \int_{S_d} |\Sigma|^{-(n+d+1)/2} \prod_{i=1}^{n}\exp \Big\{ -\frac{z_i}{2} \big(y_i-\beta^Tx_i \big)^T \Sigma^{-1} \big(y_i-\beta^Tx_i \big) \Big\} \, d\Sigma.
		\end{aligned}
		\]
		Thus,
		\[
		\int_{\mathbb{R}^p} \int_{S_d} \pi_{U|V}(\beta,\Sigma|z) \pi^3_{V|U}(z|\beta,\Sigma) \, d\Sigma \, d\beta \leq (4-3\xi)^{-nd/2}  \prod_{i=1}^{n} \frac{ z_i^{3d/2} h^3(z_i) } { ( \int_0^{\xi z_i} v^{d/2} h(v) \, dv )^3 }.
		\]
		The result follows immediately.
	\end{proof}
	
	
	\bibliographystyle{ims}
	\bibliography{qinbib}
\end{document}